\newcommand{\bt}{\beta}
\newcommand{\del}{\delta}
\newcommand{\gam}{\gamma}
\newcommand{\eps}{\epsilon}
\newcommand{\acal}{\mathcal{A}}
\newcommand{\rar}{\rightarrow}
\renewcommand{\phi}{\varphi}
\newcommand{\nint}{\mathbb{N}_0}
\newcommand{\bsl}{\setminus}
\newcommand{\qbinom}[2]{\genfrac{[}{]}{0pt}{}{#1}{#2}}
\def\Z{{\mathbb Z}}
\def\N{{\mathbb N}}
\newtheorem{thm}{Theorem}
\newtheorem{prop}[thm]{Proposition}
\newtheorem{cor}[thm]{Corollary}
\newtheorem{lemma}[thm]{Lemma}
\newtheorem*{defn}{Definition}
\theoremstyle{remark}
\newtheorem*{remark}{Remark}
\title{The Proportion of Weierstrass Semigroups}
\author[N. Kaplan]{Nathan Kaplan}
\address{Department of Mathematics\\
Harvard University\\
Cambridge, MA 02138}
\email{NKaplan@math.harvard.edu}
\author[L. Ye]{Lynnelle Ye}
\address{Department of Mathematics\\
Stanford University\\
Stanford, CA 94305}
\email{Lynnelle@stanford.edu}
\date{\today}
\subjclass[2010]{14H55, 20M14, 05A15}
\keywords{Numerical semigroup, Weierstrass Semigroup, Genus of numerical semigroup, Frobenius number}
\begin{document}

\begin{abstract}
We solve a problem of Komeda concerning the proportion of numerical semigroups which do not satisfy Buchweitz' necessary criterion for a semigroup to occur as the Weierstrass semigroup of a point on an algebraic curve.  A result of Eisenbud and Harris gives a sufficient condition for a semigroup to occur as a Weierstrass semigroup.  We show that the family of semigroups satisfying this condition has density zero in the set of all semigroups.  In the process, we prove several more general results about the structure of a typical numerical semigroup.
\end{abstract}

\maketitle 

\section{Introduction}

A numerical semigroup $S$ is an additive submonoid of $\N$ such that $\N\setminus S$ is finite.  The complement is referred to as the gap set and is denoted by $H(S)$.  Its size is called the genus of $S$ and is usually denoted by $g(S)$.  The largest of these gaps is called the Frobenius number, denoted $F(S)$, and the smallest nonzero nongap is called the multiplicity, denoted $m(S)$.  When it will not cause confusion we will omit the $S$ and write $g,F$ and $m$.  A very good source for background on numerical semigroups is \cite{RGS}.

Let $C$ be a smooth projective algebraic curve of genus $g$ over the complex numbers.  It is a theorem of Weierstrass that given any $p \in C$ there are exactly $g$ integers $\alpha_i(p)$ with $1 = \alpha_1(p) < \cdots < \alpha_g(p) \le 2g-1$ such that there does not exist a meromorphic function $f$ on $C$ which has a pole of order $\alpha_i(p)$ at $p$ and no other singularities \cite{DC}.  This characterization makes it clear that the set $\N \setminus \{\alpha_1(p),\ldots, \alpha_g(p)\}$ is a numerical semigroup of genus $g$.  We say that a semigroup $S$ is Weierstrass if there exist some curve $C$ and some point $p\in C$ such that $S$ is this semigroup.  In the late 19th century, Hurwitz suggested studying which numerical semigroups are Weierstrass \cite{Hurwitz}.  

A point $p$ such that $(\alpha_1(p),\ldots, \alpha_g(p)) \neq (1,\ldots, g)$ is called a Weierstrass point of $C$, and it is known that there are at most $g^3-g$ such points.  It is an active area of research to consider a multiset $\mathcal{S}$ of at most $g^3-g$ semigroups of genus $g$ and study the set of curves for which $\mathcal{S}$ is the collection of semigroups of the Weierstrass points of the curve.  This multiset gives us important information about the geometry of the curve.  We would like to better understand, for example, the dimension of the moduli space of curves with a fixed collection of semigroups attached to their Weierstrass points.  For more on the history of this problem see the article of del Centina \cite{DC}, or the book \cite{ACGH}.  

 In this paper we focus on two criteria that address this problem of Hurwitz.  The first is a simple combinatorial criterion of Buchweitz which is necessary for a semigroup to occur as the Weierstrass semigroup of some point on some curve $C$ \cite{Buch}.  This condition gave the first proof that not all semigroups are Weierstrass. In the final section of the paper we consider a criterion of Eisenbud and Harris \cite{EH}, which shows that certain semigroups are Weierstrass.  These two simple criteria cover much of what we know about this problem.  The main result of this paper is to show that in some sense, both of the sets covered by these criteria have density zero in the entire set of numerical semigroups.  The overall proportion of Weierstrass semigroups remains completely unknown.

Let $N(g)$ be the number of numerical semigroups of genus $g$.  Recent work of Zhai \cite{Zhai}, building on work of Zhao \cite{Zhao}, gives a better understanding of the growth of $N(g)$.  These papers build towards resolving a conjecture of Bras-Amor\'os \cite{BA}.

\begin{thm}[Zhai]\label{Zhai}
The function $N(g)$ satisfies
\[\lim_{g\to\infty} \frac{N(g)}{\varphi^g} = C,\]
where $C>0$ is a constant and $\varphi = \frac{1+\sqrt{5}}{2}$ is the golden ratio.
\end{thm}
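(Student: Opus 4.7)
The plan is to stratify numerical semigroups of genus $g$ by their Frobenius number, writing $N(g) = \sum_{k=0}^{g-1} N_k(g)$, where $N_k(g)$ counts semigroups $S$ with $F(S) = 2g-1-k$. The classical inequality $g \leq F \leq 2g-1$ (the upper bound via the pairing $n \in S \cap [1,F-1] \Rightarrow F-n \in H(S)$) makes the sum finite. The golden ratio should emerge because the dominant strata, those with small $k$, satisfy Fibonacci-type recursions, while strata with large $k$ contribute negligibly.

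For each fixed $k$, the idea is to study the structure of $S$ inside the window $[g, F]$, which contains at most $k+1$ gaps by the same pairing argument. For $k$ fixed this restricts the window to boundedly many gap patterns, so $S$ is essentially determined by its restriction to $[0, g-1]$ subject to closure conditions at the boundary. Careful bookkeeping of which genus-$(g-1)$ semigroups extend to each boundary pattern should produce a finite linear recursion whose characteristic polynomial has $\varphi$ as its dominant root, giving $N_k(g) \sim c_k \varphi^g$ for some constants $c_k \geq 0$.

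The main obstacle is a uniform tail bound $\sum_{k \geq K(g)} N_k(g) = o(\varphi^g)$ for some slowly growing $K(g) \to \infty$, so that the limit exchanges with the sum. When $k \geq \eps g$ for small fixed $\eps > 0$, the crude estimate $N_k(g) \leq \binom{2g-1-k}{g}$ combined with Stirling gives a growth rate strictly less than $\varphi$, so this tail is harmless. The delicate regime is $K(g) \leq k \leq \eps g$, where the binomial bound is too weak and one must exploit the semigroup closure condition more carefully, perhaps by induction on the multiplicity $m(S)$ or by an Apéry-set encoding, to trade a factor of $\varphi$ for each unit increase in $k$.

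Once the tail is controlled, termwise summation yields $\lim_{g\to\infty} N(g)/\varphi^g = \sum_{k \geq 0} c_k =: C$. Positivity of $C$ follows from $c_0 > 0$: one exhibits enough symmetric semigroups ($F = 2g-1$) satisfying a recursion of the form $s_g \geq s_{g-1} + s_{g-2}$, forcing exponential growth at base at least $\varphi$. The hardest technical step is clearly the intermediate tail estimate; the rest reduces to standard asymptotic analysis of linear recurrences.
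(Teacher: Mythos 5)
This theorem is quoted from Zhai's paper and is not proved in the present paper, so there is no internal proof to compare against; but your proposed strategy has a fatal structural flaw rather than a fixable gap. You stratify by $k=2g-1-F$ and plan to show that each fixed-$k$ stratum satisfies $N_k(g)\sim c_k\varphi^g$, with a tail bound $\sum_{k\ge K(g)}N_k(g)=o(\varphi^g)$ for slowly growing $K(g)$ so that the mass sits at bounded $k$. This is exactly backwards. By the results of this very paper (Theorem~\ref{tech} and Proposition~\ref{fracmg}), almost all semigroups of genus $g$ have $m\approx\gamma g$ with $\gamma=\frac{5+\sqrt5}{10}\approx0.7236$ and $F\approx 2m\approx1.447g$, hence $k=2g-1-F\approx0.553g$. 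So every fixed-$k$ stratum has density zero (each $c_k=0$), the tail $\sum_{k\ge K(g)}N_k(g)$ contains asymptotically \emph{all} of $N(g)$ whenever $K(g)=o(g)$, and termwise summation would produce $C=\sum_k c_k=0$, contradicting the positivity you need. The decomposition that actually works (Zhao, then Zhai) is by the ratio $F/m$: the spine of the count is the family with $F<2m$, whose Ap\'ery sets are arbitrary tuples in $\{1,2\}^{m-1}$ and are counted exactly by $\sum_R\binom{g-R}{R}=F_{g+1}\sim\frac{\varphi}{\sqrt5}\varphi^g$; the semigroups with $2m<F<3m$ contribute a convergent correction via the type-$(A;k)$ bounds, and Zhai's hard contribution is that $F>3m$ gives $o(\varphi^g)$. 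The golden ratio enters through that binomial identity, not through recursions at nearly maximal Frobenius number.

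Your positivity argument fails for the same reason: semigroups with $F=2g-1$ are the symmetric ones, which have density zero (typical $F\approx1.447g$ is far below $2g-1$), and even the claimed inequality $s_g\ge s_{g-1}+s_{g-2}$ is false at small genus --- the symmetric counts for $g=3,4,5$ are $2,3,3$, and $3<3+2$. A correct positivity argument instead exhibits the $F<2m$ family directly, whose count $F_{g+1}$ already grows at rate $\varphi$.
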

\noindent This result will play an important role in some of our proofs.

We next recall the criterion of Buchweitz \cite{Buch}.
\begin{prop}[Buchweitz]
Let $S$ be a semigroup of genus $g$ and let $H(S)$ be the set of gaps of $S$.  Suppose that there exists some $n > 1$ such that
\[|n H(S)| > (2n-1)(g-1),\]
where $n H(S)$ is the $n$-fold sum of the set $H(S)$.  Then $S$ is not Weierstrass.
\end{prop}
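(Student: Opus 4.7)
The plan is a proof by contradiction that translates the combinatorial hypothesis into a dimension estimate for sections of a power of the canonical bundle. Assume $S$ is the Weierstrass semigroup at a point $p$ of some smooth projective curve $C$ of genus $g$, and let $H(S) = \{\alpha_1 < \cdots < \alpha_g\}$.

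The first step is to recall the cohomological interpretation of the gaps: by the Weierstrass gap theorem, $\alpha$ is a gap of $S$ if and only if there is a holomorphic differential on $C$ vanishing to order exactly $\alpha-1$ at $p$. Filtering $H^0(C, K_C)$ by order of vanishing at $p$, one obtains a basis $\omega_1, \ldots, \omega_g$ with $\operatorname{ord}_p(\omega_i) = \alpha_i - 1$.

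The second step is to push this into the $n$-th pluricanonical bundle. For any tuple $1 \le i_1 \le \cdots \le i_n \le g$, the product $\omega_{i_1}\cdots \omega_{i_n}$ is a nonzero element of $H^0(C, K_C^{\otimes n})$ with vanishing order $(\alpha_{i_1} + \cdots + \alpha_{i_n}) - n$ at $p$. Varying the tuple, the set of vanishing orders realized at $p$ is exactly $\{s - n : s \in nH(S)\}$, a set of cardinality $|nH(S)|$.

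The third step bounds the number of possible vanishing orders at $p$ of sections of any line bundle $L$: filtering $H^0(C,L)$ by order of vanishing at $p$ shows that the set of orders realized has cardinality equal to $h^0(C,L)$. Applying this with $L = K_C^{\otimes n}$, and using Riemann–Roch (together with $\deg K_C^{\otimes n} = n(2g-2) > 2g-2$ for $n \ge 2$, so that $H^1$ vanishes) to compute $h^0(C, K_C^{\otimes n}) = (2n-1)(g-1)$, one concludes that the set of possible vanishing orders has size exactly $(2n-1)(g-1)$.

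Putting these together, the set $\{s - n : s \in nH(S)\}$ is a subset of a set of size $(2n-1)(g-1)$, so $|nH(S)| \le (2n-1)(g-1)$, contradicting the hypothesis. There is no serious obstacle here: the only non-formal ingredient is the bijection between gaps and vanishing orders of holomorphic differentials at $p$, which is classical, and the dimension count for pluricanonical sections is immediate from Riemann–Roch.
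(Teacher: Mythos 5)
The paper does not prove this proposition; it is quoted from Buchweitz's thesis, so there is no in-text argument to compare against. Your proof is the standard (and correct) one: gaps correspond to vanishing orders of holomorphic differentials at $p$, products of $n$ such differentials realize every element of $nH(S)-n$ as a vanishing order of a section of $K_C^{\otimes n}$, and the filtration-by-vanishing-order count caps the number of realizable orders at $h^0(K_C^{\otimes n})=(2n-1)(g-1)$. The only caveat worth recording is that the Riemann--Roch step needs $\deg K_C^{\otimes n}>2g-2$, which forces $g\ge 2$; this is harmless since the criterion is vacuous or degenerate for $g\le 1$, but you should state the hypothesis explicitly.
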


Let $NB(g)$ be the number of semigroups $S$ of genus $g$ for which there is some $n$ such that $S$ does not satisfy the Buchweitz criterion with this $n$.  Let $NB_2(g)$ be the number of semigroups $S$ of genus $g$ such that $|2 H(S)| > 3(g-1)$.  Komeda seems to be the first to have studied $\lim_{g\rightarrow \infty} \frac{NB_2(g)}{N(g)}$ \cite{Komeda}.

The following is part of a table included in \cite{Komeda}:
\[
\begin{tabular}{| c | c | c | c |}
\hline
$g$ & $N(g)$ & $NB_2(g)$ & $\frac{NB_2(g)}{N(g)}$ \\
\hline
\hline
$16$ & $4806$ & $2$ & $.000416$ \\
\hline
$17$ & $8045$ & $6$ & $.000746$ \\
\hline
$18$ & $13467$ & $15$ & $.001114$ \\
\hline
$19$ & $22464$ & $31$ & $.001380$ \\
\hline
$20$ & $37396$ & $67$ & $.001792$ \\
\hline
$21$ & $62194$ & $145$ & $.002331$ \\
\hline
$22$ & $103246$ & $293$ & $.002838$ \\
\hline
$23$ & $170963$ & $542$ & $.003170$ \\
\hline
$24$ & $282828$ & $1053$ & $.003723$ \\
\hline
$25$ & $467224$ & $1944$ & $.004161$ \\
\hline
\end{tabular}\ .
\]
One main goal of this paper is to show that this limit is $0$.  In fact, we will show that the limit of the ratio of $NB(g)$ to $N(g)$ is $0$.  The key step in this argument will be a technical result building on work of Zhai \cite{Zhai}.

In the final part of the paper we will focus on the proportion of semigroups which are known to occur as Weierstrass semigroups.  Eisenbud and Harris \cite{EH}, have shown that a certain class of semigroups with $F<2m$ do occur as Weierstrass semigroups.  We will show that the proportion of such semigroups is $0$ as $g$ goes to infinity.

\section{Semigroups Satisfying the Buchweitz Criterion}

We first show that certain classes of semigroups cannot possibly fail the Buchweitz criterion for any $n$.  Fix $\varepsilon > 0$ and suppose that $S$ is a semigroup with $(2-\varepsilon) m < F < (2+\varepsilon) m$.  We want to consider when $S$ fails the Buchweitz criterion for some chosen value of $n$.  We have,
\[ |n H| \le (2+\varepsilon)n m - (n-1).\]
Therefore, $|n H| > (2n-1)(g-1)$ implies that 
\[g \le \frac{(2+\varepsilon)n m + n}{2n-1} = (2+\varepsilon)\frac{nm}{2n-1}  + \frac{n}{2n-1}<\left(2+\frac{1}{20}\right) \frac{nm}{2n-1}\]
whenever $\varepsilon< \frac{1}{20}-\frac{1}{m}$.  We see that this inequality holds for any $\varepsilon< \frac{1}{21}$ and $m \ge 420$.

We note that since $n\ge 2$ is an integer, \[\left(2+\frac{1}{20}\right) \frac{n}{2n-1} \le \frac{41}{30} < 1.3667.\]

We will state the results of the above paragraph as a proposition.
\begin{prop}\label{Bfail}
Let $\varepsilon < \frac{1}{21}$ and $m\ge 420$.

Suppose that $S$ is a semigroup with $(2-\varepsilon) m < F < (2+\varepsilon) m$.  Then $|nH| > (2n-1)(g-1)$ implies $g < 1.3667 m$.
\end{prop}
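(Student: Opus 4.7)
The plan is to bound $|nH|$ from above in terms of $m$ and $n$ using the assumption $F < (2+\varepsilon)m$, combine this with the Buchweitz-failure hypothesis $|nH| > (2n-1)(g-1)$ to isolate $g$, and then use the constraints on $\varepsilon, m, n$ to turn the resulting fraction into a constant multiple of $m$.

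First I would observe that $H = H(S) \subseteq \{1, 2, \ldots, F\}$, so every element of $nH$ lies in $\{n, n+1, \ldots, nF\}$. This immediately gives the crude upper bound
\[
|nH| \le nF - n + 1 \le (2+\varepsilon)nm - (n-1),
\]
where the second inequality uses the hypothesis $F < (2+\varepsilon)m$. Combining with $|nH| > (2n-1)(g-1)$ and solving for $g$ yields
\[
g < \frac{(2+\varepsilon)nm + n}{2n-1} = (2+\varepsilon)\frac{nm}{2n-1} + \frac{n}{2n-1}.
\]

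Next I would absorb the additive error term $\frac{n}{2n-1}$ into the leading coefficient by increasing $2+\varepsilon$ to $2 + \tfrac{1}{20}$. Concretely, the bound $(2+\varepsilon)\frac{nm}{2n-1} + \frac{n}{2n-1} < (2 + \tfrac{1}{20})\frac{nm}{2n-1}$ reduces to $\varepsilon + \tfrac{1}{m} < \tfrac{1}{20}$, which holds under the assumptions $\varepsilon < \tfrac{1}{21}$ and $m \ge 420$ (with a small margin to spare). Hence $g < (2 + \tfrac{1}{20})\frac{nm}{2n-1}$.

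Finally, since $n$ is an integer with $n \ge 2$, the function $\frac{n}{2n-1}$ is decreasing in $n$ and attains its maximum at $n=2$, giving $\frac{n}{2n-1} \le \frac{2}{3}$. Therefore
\[
(2 + \tfrac{1}{20})\frac{n}{2n-1} \le (2 + \tfrac{1}{20}) \cdot \tfrac{2}{3} = \tfrac{41}{30} < 1.3667,
\]
and we conclude $g < 1.3667\, m$, as claimed. There is no real obstacle here; the argument is a straightforward chain of elementary bounds, and the only care needed is bookkeeping to confirm that the chosen thresholds $\varepsilon < \tfrac{1}{21}$ and $m \ge 420$ are large enough to absorb the lower-order terms into the desired numerical constant.
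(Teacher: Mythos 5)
Your proof is correct and follows essentially the same chain of bounds as the paper: the estimate $|nH|\le nF-(n-1)\le(2+\varepsilon)nm-(n-1)$, solving for $g$, absorbing the additive term via $\varepsilon+\frac{1}{m}<\frac{1}{20}$ (which is exactly where $\varepsilon<\frac{1}{21}$ and $m\ge 420$ enter), and finally using $\frac{n}{2n-1}\le\frac{2}{3}$ for integer $n\ge 2$ to get the constant $\frac{41}{30}<1.3667$. The only difference is that you spell out the justification for the initial bound on $|nH|$, which the paper takes for granted.
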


The main technical result of the rest of this paper is that the restriction on the genus in the proposition does not occur often.

\begin{thm}\label{tech}
\begin{enumerate}
\item Fix $\varepsilon > 0$.  Let $A(g)$ be the number of semigroups of genus $g$ satisfying $(2-\varepsilon) m < F < (2+\varepsilon) m$.  Then
\[\lim_{g \to \infty} \frac{A(g)}{N(g)} = 1.\]

\item Let $B(g)$ be the number of semigroups of genus $g$ with $m < 420$.  Then
\[\lim_{g\to \infty} \frac{B(g)}{N(g)} = 0.\]

\item Let $C(g)$ be the number of semigroups of genus $g$ with $g < 1.3667 m$.  Then
\[\lim_{g \to \infty} \frac{C(g)}{N(g)} = 0.\]
\end{enumerate}
\end{thm}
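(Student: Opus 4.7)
Proof plan for Theorem \ref{tech}.

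Part (2) will be simplest: the plan is to show that for each fixed multiplicity $m$, the numerical semigroups of multiplicity $m$ correspond via their Ap\'ery sets (equivalently, Kunz coordinates) to integer points of a rational polytope in $\R^{m-1}$, with genus a linear functional. An Ehrhart-type estimate then bounds the number of such semigroups of genus $g$ by $O_m(g^{m-2})$. Summing over $m \le 419$ will give $B(g) = O(g^{418}) = o(\varphi^g) = o(N(g))$ by Theorem \ref{Zhai}.

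For parts (1) and (3), I would split the count by whether $F < 2m$ or $F \geq 2m$. The $F < 2m$ piece is transparent: for each $m$, choosing any subset $A \subseteq \{m, \ldots, 2m-1\}$ with $m \in A$ and setting $S = \{0\} \cup A \cup [2m, \infty)$ produces a multiplicity-$m$ semigroup of genus $2m - 1 - |A|$, and every such semigroup arises uniquely this way. Thus the count in genus $g$ and multiplicity $m$ equals $\binom{m-1}{2m-2-g}$, and the total over $m$ is the Fibonacci number $F_{g+1} \sim \varphi^{g+1}/\sqrt{5}$. The $F \geq 2m$ piece has no comparably clean description; its analysis will draw on a refinement of the tree-of-semigroups construction of \cite{Zhao} and \cite{Zhai}.

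For part (3), I would first bound the $F < 2m$ contribution to $C(g)$ by $\sum_{m > g/1.3667} \binom{m-1}{2m-2-g}$. Substituting $\alpha = 2 - g/m$ (so the constraint $g < 1.3667\, m$ becomes $\alpha > 0.6333$), each term is asymptotically $2^{g H(\alpha)/(2-\alpha)}$ by Stirling, where $H$ is the binary entropy. A direct calculation (differentiating and solving $\beta(\beta+1) = 1$ for $\beta = (1-\alpha)/\alpha$) places the unconstrained maximum of $H(\alpha)/(2-\alpha)$ at $\alpha = 1/\varphi \approx 0.618$, where it equals exactly $\log_2 \varphi$ (consistent with the Fibonacci total above). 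Since $0.6333 > 1/\varphi$ and the function is unimodal, the supremum over $\alpha > 0.6333$ equals $H(0.6333)/1.3667 < \log_2 \varphi$, so the restricted sum is $o(\varphi^g)$. For the $F \geq 2m$ contribution, I would prove an analogous entropy bound on Ap\'ery sets within Zhai's framework, again leveraging that $1.3667 < 2 - 1/\varphi \approx 1.382$ still leaves a strictly positive entropy gap.

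Part (1) proceeds along the same lines. For $F \leq (2-\varepsilon)m$ among $F < 2m$ semigroups, the constraint forces $\{\lceil(2-\varepsilon)m\rceil+1,\ldots,2m-1\} \subseteq A$, which reduces $\binom{m-1}{2m-2-g}$ to a binomial over a strictly smaller range and pushes the entropy exponent below $\log_2 \varphi$ uniformly in $\varepsilon > 0$. For $F \geq (2+\varepsilon)m$, the argument mirrors the $F \geq 2m$ case of part (3) with a strengthened quantitative gap. The main obstacle throughout will be the $F \geq 2m$ half of parts (1) and (3): this requires a more quantitative version of Zhai's combinatorial counting than what is stated in \cite{Zhai}, one that carries an explicit entropy-style exponent in the multiplicity-to-genus ratio. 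The numerical threshold $1.3667$ in part (3) is plainly chosen to leave only a narrow margin below the typical value $2 - 1/\varphi \approx 1.382$, so the gap will have to be tracked carefully.
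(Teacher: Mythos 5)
Your treatment of part (2) is correct and in fact more self-contained than the paper's: bounding the number of multiplicity-$m$, genus-$g$ semigroups by the number of integer tuples $(k_1,\dotsc,k_{m-1})$ with $k_i\ge1$ and $\sum k_i=g$ (at most $\binom{g-1}{m-2}$, with or without the Ehrhart language) gives $B(g)=O(g^{417})=o(\varphi^g)$ directly, whereas the paper deduces (2) from part (1) together with the finiteness of the set of semigroups with bounded Frobenius number. Likewise, the $F<2m$ halves of parts (1) and (3) in your plan are essentially the paper's Proposition~\ref{minuseps}, Proposition~\ref{less2m} and Corollary~\ref{msizeless2m}: your entropy maximization at $\alpha=1/\varphi$ is the same Stirling computation that locates the peak of $\binom{m-1}{g-m+1}$ at $m\approx\gamma g$ with $\gamma=\frac{5+\sqrt5}{10}$.

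The genuine gap is the $F\ge2m$ half of parts (1) and (3), which you yourself flag as ``the main obstacle'' and resolve only by postulating a more quantitative, entropy-carrying refinement of Zhai's counting that you never supply; that refinement is precisely the hard content of the theorem, so the proposal as written does not establish (1) or (3). The paper's key observation is that no such refinement is needed. After discarding $F>3m$ via Theorem~\ref{3m}, every semigroup with $2m<F<3m$ has a Zhao type $(A;k)$ with $k=F-2m$, and Zhao's bound $F_{g-|(A+A)\cap[0,k]|+|A|-k-1}$ on the number of semigroups of each type, combined with Zhai's limit theorem (Theorem~\ref{Zhai}) applied to Zhao's formula for $T(g)\varphi^{-g}$, shows that the series $\sum_{k\ge1}\sum_{A\in\mathcal{A}_k}\varphi^{-|(A+A)\cap[0,k]|+|A|-k-1}$ converges. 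The condition $F>(2+\varepsilon)m$ forces $k>\varepsilon g/3$, so $P_\varepsilon(g)\varphi^{-g}$ is bounded by the tail of a convergent series and tends to $0$ --- a soft argument requiring no explicit exponent in the multiplicity-to-genus ratio. Part (3) for $2m<F<3m$ then follows from Proposition~\ref{fracmg}, which uses Theorem~\ref{pluseps} to restrict to $F<(2+\epsilon/6)m$ and runs a binomial concentration argument on the tail $(k_{a+1},\dotsc,k_{m-1})$ of the Ap\'ery tuple. To complete your proposal you would either need to prove the quantitative version of Zhai's counting you invoke, or replace it with this convergence-of-the-type-series argument.
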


The claim for $A(g)$ follows directly from Proposition~\ref{minuseps} and Theorem~\ref{pluseps} below.  Suppose we have established this claim.  The proportion of numerical semigroups of genus $g$ with Frobenius number at least $(2+\varepsilon) m$ goes to $0$ as $g$ goes to infinity.  For any $\varepsilon$ the number of semigroups with Frobenius number at most $(2+\varepsilon) 420$ is finite.  Therefore, as $g$ goes to infinity, almost all semigroups satisfying $(2-\varepsilon) m < F < (2+\varepsilon) m$ have $m > 420$.  This establishes the claim for $B(g)$.  The statement regarding $C(g)$ follows from Corollary~\ref{msizeless2m} and Proposition~\ref{fracmg}.

From Theorem \ref{tech} it is easy to prove our main theorem.
\begin{thm}
\label{main}
Let $NB(g)$ be the number of semigroups of genus $g$ which fail the Buchweitz criterion for some $n$.
Then
\[\lim_{g \to \infty} \frac{NB(g)}{N(g)} = 0.\]
\end{thm}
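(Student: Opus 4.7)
The plan is to combine Proposition~\ref{Bfail} with the three parts of Theorem~\ref{tech} via a simple union-bound argument. Fix any $\varepsilon < \tfrac{1}{21}$ (say $\varepsilon = \tfrac{1}{22}$) so that Proposition~\ref{Bfail} applies. The contrapositive of that proposition reads: if $S$ has genus $g$ with $m \geq 420$, with $(2-\varepsilon)m < F < (2+\varepsilon)m$, and with $g \geq 1.3667\,m$, then for every integer $n \geq 2$ we have $|nH(S)| \leq (2n-1)(g-1)$, so $S$ does not fail the Buchweitz criterion for any $n$.

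Consequently, every semigroup counted by $NB(g)$ must violate at least one of the three conditions listed above. This gives the inclusion-exclusion-free bound
\[
NB(g) \;\leq\; B(g) \;+\; \bigl(N(g) - A(g)\bigr) \;+\; C(g),
\]
where $A(g)$, $B(g)$, $C(g)$ are as defined in Theorem~\ref{tech}. Dividing by $N(g)$ gives
\[
\frac{NB(g)}{N(g)} \;\leq\; \frac{B(g)}{N(g)} \;+\; \left(1 - \frac{A(g)}{N(g)}\right) \;+\; \frac{C(g)}{N(g)}.
\]

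By Theorem~\ref{tech}(1), the middle term tends to $0$; by parts (2) and (3) the other two terms tend to $0$. Hence $NB(g)/N(g) \to 0$, as desired.

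There is no real obstacle here once Theorem~\ref{tech} is in hand: all the work has been pushed into the three asymptotic statements about the ``generic shape'' of a numerical semigroup (typical Frobenius number close to $2m$, typical multiplicity large, and typical genus much larger than $1.3667\,m$). The only subtlety is purely bookkeeping, namely making sure the same $\varepsilon$ works simultaneously in Proposition~\ref{Bfail} and in the application of Theorem~\ref{tech}(1), which is immediate since $\varepsilon$ in Theorem~\ref{tech}(1) is arbitrary.
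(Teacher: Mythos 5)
Your proposal is correct and is essentially the paper's own argument, just with the union bound $NB(g) \le B(g) + (N(g)-A(g)) + C(g)$ written out explicitly rather than phrased as ``almost all semigroups lie in the range of Proposition~\ref{Bfail} but almost none satisfy $g < 1.3667m$.'' No further comment is needed.
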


\begin{proof}
Suppose Theorem \ref{tech} holds.  Choose $\varepsilon < \frac{1}{21}$.  Theorem \ref{tech} implies that almost all semigroups have Frobenius number and multiplicity in the range given in the statement of Proposition \ref{Bfail}, but that almost no such semigroups satisfy $g < 1.3667 m$, completing the proof.
\end{proof}

\section{Ap\'ery Sets and Semigroups with $F<2m$}
The Ap\'ery set of a numerical semigroup $S$ with respect to its multiplicity $m$, often just called the Ap\'ery set, is a set of $m$ nonnegative integers giving for each $0\le i \le m-1$ the smallest integer in $S$ congruent to $i$ modulo $m$ \cite{RGS}.  We will omit $0$ from this set, and represent the Ap\'ery set by $\{k_1 m +1,\ldots, k_{m-1} m + m-1\}$ where each $k_i \in \N$.  We note that there are exactly $k_i$ gaps of $S$ equivalent to $i$ modulo $m$, and therefore the genus of $S$ is $\sum_{i=1}^{m-1} k_i$.  The Frobenius number is the largest Ap\'ery set element minus $m$.

From the definition of the Ap\'ery set it is clear that certain inequalities must hold between the integers $k_i$.  In fact, a result of Branco, Garc\'ia-Garc\'ia, Garc\'ia-S\'anchez and Rosales (\cite{Rosales}) gives a set of inequalities which completely determine whether the set $\{k_1 m+1, \ldots, k_{m-1} m + m-1\}$ is the Ap\'ery set of a numerical semigroup of multiplicity $m$.

\begin{prop}[Rosales et al.]\label{Rosales}
Consider the following set of inequalities:
\begin{eqnarray*}
x_i \ge 1 & & \text{for all}\ i\in \{1,\ldots, m-1\} \\
x_i + x_j \ge x_{i+j}  & &\text{for all}\ 1 \le i \le j \le m-1,\ i+j \le m-1\\
x_i +x_j + 1 \ge x_{i+j-m} & &\text{for all}\ 1 \le i \le j \le m-1,\ i+j > m\\
x_i \in \Z & &\text{for all}\ i\in \{1,\ldots, m-1\},\\
\sum_{i=1}^{m-1} x_i = g.
\end{eqnarray*}

There is a one-to-one correspondence between semigroups with multiplicity $m$ and genus $g$ and solutions to the above inequalities, where we identify the solution $\{k_1,\ldots, k_{m-1}\}$ with the semigroup that has Ap\'ery set $\{k_1 m + 1,\ldots, k_{m-1} m + m-1\}$.
\end{prop}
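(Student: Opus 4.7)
The plan is to establish the stated correspondence by proving two directions. In one direction, I would take a semigroup $S$ of multiplicity $m$ and genus $g$, read off its Ap\'ery set as $\{k_1 m + 1, \ldots, k_{m-1} m + (m-1)\}$, and verify that these $k_i$ satisfy the inequalities. In the other, given integers $k_1, \ldots, k_{m-1}$ satisfying the inequalities, I would construct
\[S := \{0\} \cup m\N \cup \bigcup_{i=1}^{m-1} \{k_i m + i + s m : s \ge 0\}\]
and verify that $S$ is a numerical semigroup of multiplicity $m$ and genus $g$ whose Ap\'ery set is the prescribed one.

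For the forward direction, the bound $k_i \ge 1$ follows because $i < m$ cannot lie in $S$ when $m$ is the multiplicity, so the smallest nongap in residue class $i$ modulo $m$ exceeds $i$ itself. For the sum inequalities, write $w_i = k_i m + i$ for the Ap\'ery element in residue class $i$; then $w_i + w_j \in S$ lies in residue class $(i+j) \bmod m$, so by minimality it is at least $w_{(i+j) \bmod m}$. The two cases $i+j \le m-1$ and $i+j > m$ differ only by whether a unit ``carry'' of $m$ appears when reducing $i+j$ modulo $m$, which is exactly what accounts for the extra $+1$ in the third inequality. The genus identity follows by partitioning $H(S)$ by residue class: the gaps congruent to $i$ modulo $m$ are exactly $\{i, m+i, \ldots, (k_i-1)m + i\}$, contributing $k_i$ each and summing to $g$.

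For the backward direction, the heart of the argument is closure under addition. Given $a = k_i m + i + s_1 m$ and $b = k_j m + j + s_2 m$ in $S$ with $1 \le i, j \le m-1$ (the cases where one of them is a pure multiple of $m$ are immediate), I would compute
\[a + b = (k_i + k_j + s_1 + s_2)\, m + (i + j)\]
and split into the three cases $i + j \le m-1$, $i + j = m$, and $i + j > m$. In each case the corresponding inequality from the proposition (or, for $i+j = m$, the trivial fact that multiples of $m$ lie in $S$) yields $a+b \in S$. Once closure is established, the multiplicity equals $m$ because $k_i \ge 1$ forces $k_i m + i > m$, the Ap\'ery set is as claimed by construction, and the genus is $\sum_{i=1}^{m-1} k_i = g$ by the same residue-class count used in the forward direction.

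The only potentially delicate step is the closure verification in the case $i + j > m$, where the extra $+1$ in the third inequality precisely absorbs the unit carry arising when $i+j$ is reduced modulo $m$. Once this bookkeeping is handled carefully, all remaining verifications are routine.
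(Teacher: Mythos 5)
The paper does not prove this proposition; it is quoted directly from Branco, Garc\'ia-Garc\'ia, Garc\'ia-S\'anchez and Rosales, so there is no in-paper argument to compare against. Your two-way verification is correct and is the standard one: the forward direction follows from minimality of the Ap\'ery elements $w_i = k_i m + i$ in their residue classes together with closure of $S$ (the carry of $m$ when $i+j>m$ producing the extra $+1$), and the backward direction's closure check splits cleanly into the cases $i+j \le m-1$, $i+j=m$, and $i+j>m$ exactly as you describe, with the genus count $\sum_{i=1}^{m-1} k_i = g$ coming from the $k_i$ gaps in each nonzero residue class. No gaps.
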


Recent work has made use of this correspondence, counting semigroups by counting valid Ap\'ery sets, for example \cite{Kaplan}.  This can be very useful in giving numerical results.  We will separately consider two classes of semigroups, those with $F< 2m$ and those with $2m < F< 3m$.  The first case is much simpler.

Note that $F<2m$ is exactly equivalent to the condition that each $k_i$ is equal to $1$ or $2$.  Given $m$, the above proposition implies that any set $\{k_1,\ldots, k_{m-1}\}$ where each $k_i$ is either $1$ or $2$ gives the Ap\'ery set of some numerical semigroup.

Suppose we have a semigroup of genus $g$ with $F<2m$ and Ap\'ery set given by $\{k_1m + 1,\ldots, k_{m-1} m + m-1\}$.  Let $R$ be the number of $k_i$ which are equal to $2$.  We see that $m - 1 + R = g$ and that $R$ can take on any value from $0$ to $m-1$.  Therefore we have that $m$ can take on any value from $\lfloor \frac{g+2}{2} \rfloor$ to $g+1$.  Given a pair of $m$ and $R$ such that $m-1 +R = g$ we see that there are $\binom{m-1}{R} = \binom{g-R}{R}$ choices for the $R$ values of $i$ such that $k_i = 2$.  It is a straightforward inductive exercise to prove that 
\[\sum_{R = 0}^{\lfloor \frac{g}{2}\rfloor} \binom{g-R}{R} = F_{g+1},\]
the $g+1$st Fibonacci number.

It is well-known that $F_{g+1}$ is asymptotic to $\frac{\varphi}{\sqrt{5}} \varphi^g = \frac{5+\sqrt{5}}{10} \varphi^g$ as $g$ goes to infinity.  The above sum is very tightly clustered around its maximum value.
\begin{prop}
\label{less2m}
Let $\alpha = \frac{5-\sqrt{5}}{10}$ and fix $\varepsilon > 0$.  We have
\[ \sum_{R = 0}^{\lceil(\alpha-\varepsilon) g\rceil} \binom{g-R}{R} = o(\varphi^g).\]
We also have
\[ \sum_{R = \lfloor(\alpha + \varepsilon) g\rfloor}^{\lfloor \frac{g}{2}\rfloor} \binom{g-R}{R} = o(\varphi^g).\]
\end{prop}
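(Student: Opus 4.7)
The plan is to show that the summand $a_R := \binom{g-R}{R}$ is a unimodal sequence in $R$ with a single exponential peak near $R \approx \alpha g$. Since $\sum_{R=0}^{\lfloor g/2\rfloor} a_R = F_{g+1} \sim \varphi^{g+1}/\sqrt{5}$ by the identity derived just before the proposition, the bulk of this sum is concentrated near $R = \alpha g$, so both tails described in the proposition will be exponentially smaller than $\varphi^g$.

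First, I would compute the ratio
\[
\frac{a_{R+1}}{a_R} = \frac{(g-2R)(g-2R-1)}{(R+1)(g-R)},
\]
which for $R = \beta g$ tends to $(1-2\beta)^2/[\beta(1-\beta)]$. Setting this equal to $1$ gives $5\beta^2 - 5\beta + 1 = 0$, whose unique root in $(0, 1/2)$ is $\alpha = (5-\sqrt{5})/10$. As this limit is strictly decreasing in $\beta$, the sequence $(a_R)$ is unimodal with peak at some $R^* = \alpha g + O(1)$. Next, I would apply the standard entropy upper bound $\binom{n}{k} \leq 2^{n h(k/n)}$ (for the binary entropy function $h$) to get $a_R \leq 2^{g \tilde{f}(R/g)}$, where $\tilde{f}(\beta) := (1-\beta) h(\beta/(1-\beta))$ is continuous on $[0, 1/2]$ with $\tilde{f}(0) = \tilde{f}(1/2) = 0$. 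Differentiating $\tilde{f}$ leads back to the same quadratic, so $\alpha$ is the unique interior critical point; using $(1-2\alpha)^2 = \alpha(1-\alpha) = 1/5$ together with the identity $(1-\alpha)/\alpha = \varphi^2$ yields $\tilde{f}(\alpha) = \tfrac{1}{2}\log_2((1-\alpha)/\alpha) = \log_2 \varphi$, which is therefore the global maximum of $\tilde{f}$ on $[0, 1/2]$.

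Finally, for fixed $\varepsilon > 0$, continuity and compactness give some $\delta = \delta(\varepsilon) > 0$ with $\tilde{f}(\beta) \leq \log_2 \varphi - \delta$ whenever $\beta \in [0, 1/2]$ satisfies $|\beta - \alpha| \geq \varepsilon$. By unimodality, every term in either tail is bounded above by the tail-endpoint term nearest $\alpha g$, which is at most $\varphi^g \cdot 2^{-\delta g}$; summing over at most $g$ such terms still yields $o(\varphi^g)$. The only genuinely nontrivial step is identifying $\alpha$ as the unique argmax with value exactly $\log_2 \varphi$, but both routes above reduce to the same quadratic $5\beta^2 - 5\beta + 1 = 0$, so this obstacle is mild. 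No separate treatment of the endpoints $R=0$ or $R=\lfloor g/2\rfloor$ is needed, since the entropy bound is valid globally (with the convention $h(0) = h(1) = 0$); the degenerate parameter ranges $\varepsilon > \alpha$ or $\varepsilon > 1/2 - \alpha$ make the corresponding tail empty and hence trivial.
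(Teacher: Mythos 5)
Your proof is correct and follows essentially the same route as the paper: both arguments reduce to showing that the exponential growth rate of $\binom{(1-c)g}{cg}$ --- the paper's $f(c) = \frac{(1-c)^{1-c}}{c^c(1-2c)^{1-2c}}$, which is $2^{\tilde f(c)}$ in your notation --- is uniquely maximized on $[0,\tfrac12]$ at $c=\alpha$ with value $\varphi$, via the same quadratic $5c^2-5c+1=0$, and then bound each tail by the number of terms times its largest term. The only differences are cosmetic: you use the entropy bound $\binom{n}{k}\le 2^{nh(k/n)}$ where the paper uses Stirling's formula, and you justify the term-by-term comparison with an explicit ratio computation that the paper leaves implicit; just note that unimodality of the finite sequence requires the exact ratio $\frac{(g-2R)(g-2R-1)}{(R+1)(g-R)}$ to be decreasing in $R$, not merely its limiting value in $\beta$ --- fortunately it is, since the numerator decreases and the denominator increases on the relevant range.
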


\begin{proof}
Stirling's approximation says that $n! \sim \sqrt{2\pi n} \left(\frac{n}{e}\right)^n$.  Therefore, 
\begin{eqnarray*}
\binom{(1-c)n}{cn} & = & \frac{((1-c)n)!}{(cn)! ((1-2c)n)!} \\
& \sim & \frac{\sqrt{2\pi (1-c) n}}{\sqrt{2\pi (1-2c) n}\sqrt{2\pi c n}} \frac{e^{cn} e^{(1-2c)n}}{e^{(1-c)n}} \frac{((1-c)n)^{(1-c)n}}{((1-2c)n)^{(1-2c)n} (cn)^{cn}} \\
& = & \frac{1}{\sqrt{2 \pi n}} \frac{ \sqrt{1-c}}{\sqrt{c}\sqrt{1-2c}} \left(\frac{(1-c)^{1-c}}{c^c (1-2c)^{1-2c}}\right)^n.
\end{eqnarray*}
We see that $\binom{(1-c)n}{cn}$ is asymptotic to a constant depending on $c$ divided by $\sqrt{n}$ times $\left(\frac{(1-c)^{1-c}}{c^c (1-2c)^{1-2c}}\right)^n$.

Let $f(c) = \frac{(1-c)^{1-c}}{c^c (1-2c)^{1-2c}}$.  We claim that $f$ attains its maximum value in the range from $0$ to $\frac{1}{2}$ at $\frac{5-\sqrt{5}}{10}$.  We instead find the maximum value of $\ln(f(c))$ in this range.  We can see that the derivative of $\ln(f(c))$ is $2 \ln(1-2c) - \ln(1-c) -\ln(c)$.  Taking an exponential, we see that this is equal to $0$ when $\frac{(1-2c)^2}{(1-c)c} = 1$.  This gives $5c^2 - 5c + 1= 0$, which has roots at $c = \frac{5 \pm \sqrt{5}}{10}$.  Only one of these roots occurs in the range for which $c<1-c$, meaning that this is our unique critical point in this interval.  By choosing any value of $c$ between $0$ and this critical point, for example $c = \frac{1}{4}$, we see that $f(c)$ is increasing in the range from $0$ to our critical value, showing that $f(c)$ attains a maximum at $c = \frac{5-\sqrt{5}}{10}$.  At this value, $f(c) = \varphi$, the golden ratio.

Therefore,  
\[ \sum_{R = 0}^{\lceil(\alpha-\varepsilon) g\rceil} \binom{g-R}{R} \le (g+1) \binom{ g- \lfloor(\alpha- \varepsilon/2)g\rfloor}{\lfloor (\alpha-\varepsilon/2)g \rfloor},\]
for $g$ sufficiently large.  This is asymptotic to a constant depending on $\varepsilon$ times $\sqrt{g}$ times $\left(\frac{(1-c)^{1-c}}{c^c (1-2c)^{1-2c}}\right)^g$ for $c = \alpha-\varepsilon/2$.  This last term is $r^g$ where $r< \varphi$, showing that this sum is $o(\varphi^g)$.

We also have
\[ \sum_{R = \lfloor(\alpha + \varepsilon) g\rfloor}^{\lfloor \frac{g}{2}\rfloor} \binom{g-R}{R}  \le (g+1) \binom{ g-\lceil(\alpha+ \varepsilon/2)g\rceil}{\lceil(\alpha+\varepsilon/2)g\rceil},\]
for sufficiently large $g$.  This is also asymptotic to a constant depending on $\varepsilon$ times $\sqrt{g}$ times some $r^g$ where $r < \varphi$.  This sum is therefore also $o(\varphi^g)$.
\end{proof}

We first state a corollary related to the ratio of the multiplicity to the genus of a semigroup with $F<2m$.

\begin{cor}
\label{msizeless2m}
Fix $\epsilon >0$ and $\gamma = \frac{5+\sqrt{5}}{10}$.  Let $E_\epsilon(g)$ be the number of numerical semigroups with $F<2m$ and $(\gamma-\epsilon) g < m < (\gamma+\epsilon) g$.  Let $I(g)$ be the number of numerical semigroups with $F<2m$.  Then $\lim_{g\to \infty} \frac{E_\epsilon(g)}{I(g)} = 1$.
\end{cor}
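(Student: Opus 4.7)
The plan is to reparameterize: from the discussion preceding Proposition \ref{less2m}, semigroups with $F < 2m$ are in bijection with pairs $(m, R)$ satisfying $m - 1 + R = g$, and the number of such semigroups with a given value of $R$ is $\binom{g - R}{R}$. Substituting $m = g - R + 1$, the condition $(\gamma - \epsilon) g < m < (\gamma + \epsilon) g$ becomes $(\alpha - \epsilon) g + 1 < R < (\alpha + \epsilon) g + 1$, since $\alpha + \gamma = 1$. For $g$ large enough, the additive shift by $1$ is negligible, so up to a small adjustment of $\epsilon$ this is equivalent to $(\alpha - \epsilon/2) g < R < (\alpha + \epsilon/2) g$.

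Consequently $I(g) - E_\epsilon(g)$, which counts semigroups with $F < 2m$ whose parameter $R$ lies outside the above interval, satisfies
\[
I(g) - E_\epsilon(g) \le \sum_{R = 0}^{\lceil (\alpha - \epsilon/2) g \rceil} \binom{g - R}{R} + \sum_{R = \lfloor (\alpha + \epsilon/2) g \rfloor}^{\lfloor g/2 \rfloor} \binom{g - R}{R}
\]
for all sufficiently large $g$. Applying Proposition \ref{less2m} with $\epsilon/2$ in place of $\epsilon$, both sums on the right are $o(\varphi^g)$, and hence $I(g) - E_\epsilon(g) = o(\varphi^g)$.

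On the other hand, $I(g) = F_{g+1}$ is asymptotic to $\frac{\varphi}{\sqrt{5}} \varphi^g$, so $I(g) = \Theta(\varphi^g)$. Dividing gives
\[
\frac{E_\epsilon(g)}{I(g)} = 1 - \frac{I(g) - E_\epsilon(g)}{I(g)} \longrightarrow 1,
\]
as desired. There is no real obstacle: the corollary is a direct repackaging of Proposition \ref{less2m} through the change of variables $R = g - m + 1$, combined with the known Fibonacci asymptotics.
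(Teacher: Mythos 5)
Your proposal is correct and follows essentially the same route as the paper: both use the bijection $g = m-1+R$, the identity $1-\alpha=\gamma$, and Proposition~\ref{less2m} together with the Fibonacci asymptotics for $I(g)$. The paper's version is simply terser; your explicit handling of the $\epsilon/2$ adjustment and the tail bounds fills in details the paper leaves implicit.
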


\begin{proof}
We have $g = m-1+R$ and have seen that almost all semigroups with $F<2m$ have $(\alpha-\epsilon) g < R < (\alpha +\epsilon) g$ for $\alpha = \frac{5-\sqrt{5}}{10}$.  Since $1 - \alpha = \gamma$, we see that almost all semigroups with $F<2m$ have $(\gamma-\epsilon) g +1 < m < (\gamma+\epsilon) g + 1$.  Taking $g$ to infinity completes the proof.
\end{proof}

This proposition implies that for $g$ sufficiently large, almost all semigroups $S$ with genus $g$ and $F<2m$ have genus $m-1+R$ close to $m-1+ \alpha g$.  We note that when $R > \frac{g}{4} + 1 $, since $m-1+ R = g$ we have $m-1 < \frac{3g}{4} -1$.  Also note that $\frac{5-\sqrt{5}}{10} > \frac{1}{4}$.  In this case, we see that since our largest gap is at most $2m-1$, we have 
\[|nH| < n(2m-1)-(n-1) = 2n(m-1) + 1 < \frac{3n}{2} g - (2n- 1) \le (2n-1)(g-1),\]
since $n \ge 2$.  Therefore, we see that almost no semigroup with $F<2m$ fails the Buchweitz criterion for any $n$.  We have proven the following, the easy part of our main result.

\begin{prop}
Let $D(g)$ be the number of semigroups of genus $g$ with $F<2m$ and which fail the Buchweitz criterion for some $n$.  Then
\[\lim_{g\to \infty} \frac{D(g)}{N(g)} = 0.\]
\end{prop}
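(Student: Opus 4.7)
The plan is to deduce this proposition by combining the counting estimate of Proposition~\ref{less2m} with a cheap upper bound on $|nH|$ that rules out any Buchweitz failure as soon as the multiplicity is not too close to the genus.

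First I would verify the following elementary implication: if $S$ has $F<2m$ and $m < 3g/4$, then $S$ satisfies the Buchweitz criterion for every $n \ge 2$. Indeed, since every gap lies in $[1,2m-1]$, the $n$-fold sumset satisfies $|nH| \le n(2m-1) - (n-1) = 2n(m-1)+1$. The hypothesis $m-1 < 3g/4 - 1$ then gives $|nH| < (3n/2)g - 2n + 1$, and since $3n/2 \le 2n-1$ precisely when $n \ge 2$, this is bounded above by $(2n-1)g - 2n + 1 = (2n-1)(g-1)$.

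Next I would show that almost all semigroups of genus $g$ with $F<2m$ already satisfy $m < 3g/4$. In the parametrization of Section~3 we have $g = m-1+R$, where $R$ counts the indices with $k_i = 2$, so $m < 3g/4$ translates to $R > g/4 + 1$. The constant $\alpha = \frac{5-\sqrt 5}{10}$ satisfies $\alpha > 1/4$, so I would fix $\varepsilon > 0$ with $\alpha - \varepsilon > 1/4$. For $g$ large enough, any semigroup with $R \le g/4 + 1$ then also has $R \le (\alpha-\varepsilon)g$, and the first estimate of Proposition~\ref{less2m} says that the number of such semigroups (summed via the Fibonacci-type count $\binom{g-R}{R}$) is $o(\varphi^g)$. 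Thus $D(g) = o(\varphi^g)$.

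Finally, Zhai's Theorem~\ref{Zhai} gives $N(g) \sim C\varphi^g$ with $C>0$, so $D(g)/N(g) \to 0$. There is essentially no obstacle here: all the work has been done in Proposition~\ref{less2m} and Theorem~\ref{Zhai}, and the only new input is the one-line Buchweitz estimate above, which crucially uses that the golden ratio threshold $\alpha$ sits strictly above $1/4$. The genuinely hard case, treated in the following sections, is the complementary regime $2m < F < 3m$, where the Ap\'ery-set inequalities of Proposition~\ref{Rosales} are no longer trivially satisfied and a finer analysis building on Zhai's work is required.
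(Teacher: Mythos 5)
Your proposal is correct and follows essentially the same route as the paper: the same sumset bound $|nH|\le n(2m-1)-(n-1)$ combined with the threshold $m<3g/4$ (equivalently $R>g/4+1$), the observation that $\alpha=\frac{5-\sqrt5}{10}>\frac14$, and the tail estimate of Proposition~\ref{less2m} together with Theorem~\ref{Zhai}. No substantive differences to report.
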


The following proposition will play a part in the proof of Theorem \ref{tech}.

\begin{prop}
\label{minuseps}
Let $\epsilon > 0$.  Let $N^*_{\eps}(g)$ be the number of semigroups of genus $g$ with $F \le (2-\epsilon) m$.  We have
\[\lim_{g\rightarrow\infty} \frac{N^*_{\eps}(g)}{N(g)}= 0.\]

\end{prop}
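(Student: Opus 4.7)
The plan is to reduce the problem to a binomial-sum estimate analogous to that in the proof of Proposition~\ref{less2m}, and then to beat $\varphi^g$ by showing that the relevant exponential growth rate is strictly smaller.

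First I would translate the hypothesis $F \le (2 - \varepsilon) m$ into an Ap\'ery-set constraint. Since $F = \max_i \bigl((k_i - 1) m + i\bigr)$, the hypothesis forces $k_i \le 2$ for every $i$ (otherwise $2m + i > (2 - \varepsilon) m$), and whenever $k_i = 2$ it requires $m + i \le (2-\varepsilon)m$, i.e., $i \le (1 - \varepsilon) m$. Conversely, by Proposition~\ref{Rosales} any $(k_1, \ldots, k_{m-1}) \in \{1, 2\}^{m-1}$ is automatically a valid Ap\'ery set. Letting $R$ be the number of indices with $k_i = 2$ (so $g = m - 1 + R$), the number of such semigroups of multiplicity $m$ and genus $g$ is $\binom{\lfloor (1 - \varepsilon) m \rfloor}{R}$, and hence
\[
N^*_\varepsilon(g) \;=\; \sum_{R \ge 0} \binom{\lfloor (1 - \varepsilon)(g - R + 1) \rfloor}{R}.
\]

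Next I would apply the Stirling analysis of Proposition~\ref{less2m}. Setting $c = R/g$ and $a = (1 - \varepsilon)(1 - c)$, each binomial is asymptotic to a polynomial-in-$g$ factor times $h(c, \varepsilon)^g$, where
\[
h(c, \varepsilon) \;=\; \frac{a^a}{c^c (a - c)^{a - c}}.
\]
At $\varepsilon = 0$ this recovers the function $f$ of Proposition~\ref{less2m}, whose maximum equals $\varphi$ (attained at $c = (5 - \sqrt{5})/10$). A direct computation yields
\[
\frac{\partial}{\partial \varepsilon} \ln h(c, \varepsilon) \;=\; -(1 - c) \, \ln \frac{a}{a - c},
\]
which is strictly negative for every $c$ in the admissible open range $\bigl(0, (1-\varepsilon)/(2-\varepsilon)\bigr)$. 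Hence $h(c, \varepsilon) < h(c, 0) \le \varphi$ pointwise; since $h(\cdot, \varepsilon)$ is continuous on the compact admissible closed interval (with $h = 1$ at the endpoints), the maximum $\varphi_\varepsilon := \max_c h(c, \varepsilon)$ is attained at some $c^*$, and the pointwise bound gives $\varphi_\varepsilon < \varphi$.

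Summing the Stirling estimate over the $O(g)$ admissible values of $R$ then yields $N^*_\varepsilon(g) = O\bigl(g^{1/2} \, \varphi_\varepsilon^g\bigr) = o(\varphi^g)$, and combining with Zhai's Theorem~\ref{Zhai}, $N(g) \sim C \varphi^g$, gives $N^*_\varepsilon(g)/N(g) \to 0$. The main obstacle I foresee is the strict inequality $\varphi_\varepsilon < \varphi$: pointwise strict monotonicity of $h$ in $\varepsilon$ does not automatically pass to the maximum, so attainment of the maximum (via continuity and compactness) together with the sign of $\partial_\varepsilon \ln h$ must be carefully combined. The remaining bookkeeping — uniformity of the Stirling asymptotics over $c$ away from the endpoints and absorbing the boundary contributions (where $h = 1$) into polynomial error — is routine.
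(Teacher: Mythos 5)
Your argument is correct and follows essentially the same route as the paper: both translate $F\le(2-\epsilon)m$ into the Ap\'ery-set condition that all $k_i\in\{1,2\}$ with the $2$'s confined to positions $i\le(1-\epsilon)m$, arrive at the same binomial sum $\sum_R\binom{\lfloor(1-\epsilon)(g-R)\rfloor}{R}$, and beat $\varphi^g$ by Stirling-type estimates. The only difference is the final step: instead of your two-variable optimization of $h(c,\epsilon)$, the paper notes that $(1-\epsilon)(g-R)<(1-\epsilon')g-R$ for $\epsilon'<\epsilon/2$ (using $R\le g/2$), so each term is at most $\binom{\lfloor(1-\epsilon')g\rfloor-R}{R}$ and the conclusion follows directly from the estimate already established in the proof of Proposition~\ref{less2m} at the smaller effective genus $(1-\epsilon')g$.
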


\begin{proof}
We note that $N^*_{\epsilon}(g) = \sum_{R=0}^{\lfloor \frac{g}{2}\rfloor} \binom{\lfloor(1-\epsilon)(g-R)\rfloor}{R}$.  We note that for $\epsilon'<\epsilon/2$ we have $(1-\epsilon)(g-R)<(1-\epsilon')g - R$ since $\frac{\epsilon}{\epsilon-\epsilon'}< 2 \le \frac{g}{R}$.  Therefore, for sufficiently large $g$ we get an upper bound for this sum that is $g+1$ times the maximum value of $\binom{\lfloor (1-\epsilon')g \rfloor - R}{R}$.  By the proof of Proposition \ref{less2m}, this value is asymptotic to some constant depending on $\epsilon'$ and $c$ divided by the square root of $(1-\epsilon')g$, times $r^g$  for some $r<\varphi$.  This shows that the sum is $o(\varphi^g)$.

\end{proof}

Finally, the following proposition will be convenient for the proof of Theorem~\ref{quad}.

\begin{prop}
\label{close1}
For any $\eps>0$, there exists a $\del>0$ so that 
\[
\sum_{R=0}^{\lceil \del g\rceil}\binom{g-R}{R}=o((1+\eps)^g).
\]
\end{prop}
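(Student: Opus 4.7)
The plan is to mimic the Stirling analysis from the proof of Proposition~\ref{less2m}, but to evaluate it at the boundary $c=\del$ of the small-$R$ range rather than at the maximizing value $c=\al=\frac{5-\sqrt{5}}{10}$. Recall from that proof that, for $c=R/g$ bounded away from $0$ and $\frac{1}{2}$, $\binom{g-R}{R}$ is asymptotic to a $c$-dependent constant times $g^{-1/2}$ times $f(c)^g$, where $f(c)=\frac{(1-c)^{1-c}}{c^c(1-2c)^{1-2c}}$. The key new input is the boundary value $\lim_{c\to 0^+} f(c)=1$: both $(1-c)^{1-c}$ and $(1-2c)^{1-2c}$ tend to $1$, while $c^c\to 1$ since $c\log c\to 0$. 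By continuity, given $\eps>0$, I can fix some $\del\in(0,\al)$ with $f(\del)<1+\eps/2$, and this will be the $\del$ promised by the proposition.

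Next I would show that $\binom{g-R}{R}$ is non-decreasing in $R$ throughout $0\le R\le \al g$, so that the sum is bounded above by $(\lceil\del g\rceil+1)\binom{g-\lceil\del g\rceil}{\lceil\del g\rceil}$. The monotonicity follows from a direct computation:
\[
\frac{\binom{g-R-1}{R+1}}{\binom{g-R}{R}}=\frac{(g-2R)(g-2R-1)}{(R+1)(g-R)},
\]
which for $R=cg$ tends to $\frac{(1-2c)^2}{c(1-c)}$. The condition that this ratio be at least $1$ simplifies to $5c^2-5c+1\ge 0$, i.e.\ $c\le \al$ in the range $c\le \tfrac{1}{2}$ --- exactly the polynomial whose critical point governs Proposition~\ref{less2m}.

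Finally, since $\del$ is a fixed positive constant bounded away from $0$ and $\frac{1}{2}$, the Stirling estimate applies cleanly to the single remaining binomial and gives $\binom{g-\lceil\del g\rceil}{\lceil\del g\rceil}\le C_\del\, g^{-1/2}\, f(\del)^g$, so the whole sum is at most $C'_\del\sqrt{g}\,f(\del)^g\le C'_\del\sqrt{g}\,(1+\eps/2)^g=o((1+\eps)^g)$. The only genuinely new step is the limit $f(c)\to 1$ as $c\to 0^+$; the rest is a direct reuse of the asymptotic machinery already developed for Proposition~\ref{less2m}. I do not expect any essential obstacle beyond handling this short limit computation and recording the $\del<\al$ constraint to justify invoking monotonicity.
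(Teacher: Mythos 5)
Your proposal is correct and follows essentially the same route as the paper: the paper's proof also reuses the Stirling asymptotic for $\binom{(1-c)g}{cg}\sim C_c\, g^{-1/2} f(c)^g$ from Proposition~\ref{less2m} and observes via $c\ln c\to 0$ that $f(c)\to 1$ as $c\to 0^+$, then concludes. You have merely filled in the details the paper leaves implicit in ``the proposition follows directly,'' namely the monotonicity of $\binom{g-R}{R}$ for $R\le \al g$ and the bound of the sum by its number of terms times its largest term.
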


\begin{proof}
Consider again the function $f(c) = \frac{(1-c)^{1-c}}{c^c (1-2c)^{1-2c}}$. Recall from the proof of Proposition~\ref{less2m} that $\binom{(1-c)g}{cg}$ is asymptotic to $f(c)^g$ times a constant depending on $c$ divided by $\sqrt g$. As $c$ approaches $0$, L'H\^opital's rule shows that $c\ln c=\frac{\ln c}{1/c}$ approaches $0$, and thus $c^c$ approaches $1$. Then $f(c)$ approaches $1$ as $c$ approaches $0$. The proposition follows directly.
\end{proof}

\section{Semigroups with $F > 2m$}

We first recall a recent result of Zhai \cite{Zhai}, building on work of Zhao \cite{Zhao}, that shows that we can focus on semigroups with $2m < F < 3m$.

\begin{thm}[Zhai]
\label{3m}
Let $L(g)$ be the number of semigroups with $F>3m$ and genus $g$.  Then
\[\lim_{g\to \infty} \frac{L(g)}{N(g)} = 0.\]
\end{thm}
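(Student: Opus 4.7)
The goal is to show $L(g) = o(\varphi^g)$, since combining this with Theorem \ref{Zhai} yields $L(g)/N(g) \to 0$. I would begin with the classical bound $F \le 2g - 1$, valid for any nontrivial numerical semigroup of genus $g$: pairing each $k \in \{1, \ldots, F-1\}$ with $F - k$, at most one member of each pair is a nongap (otherwise $F$ would be a nongap, contradicting $F \in H(S)$), so the number $F - g$ of nongaps in $\{1, \ldots, F - 1\}$ is at most $\lceil (F-1)/2 \rceil$. Combined with $F > 3m$, this forces $m < 2g/3$. Since $2/3 < \gamma = (5+\sqrt{5})/10$, the typical multiplicity-to-genus ratio identified in Corollary \ref{msizeless2m}, it suffices to prove the stronger statement that the number of semigroups of genus $g$ with $m \le (2/3) g$ is $o(\varphi^g)$.

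I would stratify by multiplicity and invoke Proposition \ref{Rosales}: for each $m$, the semigroups with genus $g$ and multiplicity $m$ correspond to tuples $(k_1, \ldots, k_{m-1}) \in \Z_{\ge 1}^{m-1}$ with $\sum k_i = g$ and the Rosales subadditivity constraints. A bound that ignores the constraints and merely counts compositions of $g$ into $m-1$ positive parts gives $\binom{g-1}{m-2}$; using the entropy estimate, this is of order $2^{H(1/3) g} \approx 2^{0.918 g}$ at $m = (2/3) g$, which exceeds $\varphi^g \approx 2^{0.694 g}$ by a factor $2^{\Omega(g)}$. Hence the Rosales constraints must be used essentially; this is the main obstacle.

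My plan is to encode valid tuples by their value-histograms $(\#\{i : k_i = j\})_{j \ge 1}$ and to show, via a Stirling / large-deviation argument analogous to the proof of Proposition \ref{less2m}, that the set of histograms compatible with the Rosales constraints is sufficiently small to yield a total count of $c^g$ for some $c < \varphi$, uniformly in $m \in [2, (2/3) g]$. A useful preliminary reduction would be to show that in almost all valid tuples with $m \le (2/3) g$ the values $(k_i)$ are concentrated in a small alphabet such as $\{1,2,3\}$, converting the problem into counting constrained ternary sequences; the associated rate function---obtained by maximizing a multivariate analogue of $f(c) = (1-c)^{1-c}/(c^c (1-2c)^{1-2c})$ over admissible histograms---should then stay strictly below $\varphi$, since the average value $g/(m-1) > 3/2$ is larger than the critical ratio $1 + \alpha \approx 1.276$ at which the $\varphi^g$ growth is attained in the $F<2m$ regime. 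Summing the resulting rate bound over $m \le (2/3) g$ gives $L(g) = o(\varphi^g)$, and applying Theorem \ref{Zhai} finishes the proof.
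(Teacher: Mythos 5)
The paper does not prove this statement; it is quoted from Zhai \cite{Zhai} (it amounts to the combination of Zhao's asymptotic count of semigroups with $F<3m$ and Zhai's matching upper bound for $N(g)$, both of which require substantial work). So your proposal is necessarily an independent attempt, and the question is whether the sketch closes. It does not.

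Your opening reduction is fine: $F\le 2g-1$ together with $F>3m$ forces $m<2g/3$, and you correctly observe that any proof must exploit the Rosales constraints, since unconstrained compositions give $\binom{g-1}{m-2}\gg\varphi^g$ near $m=2g/3$. But the core of the argument is missing, for two concrete reasons. First, your ``preliminary reduction'' to the alphabet $\{1,2,3\}$ is essentially the statement being proven: in the Ap\'ery coordinates, $k_i\le 3$ for all $i$ is exactly the condition $F<3m$, so establishing in the required absolute sense that the tuples with $m\le 2g/3$ and some $k_i\ge 4$ number $o(\varphi^g)$ is a restricted form of Theorem \ref{3m} itself; you cannot assume it as a lemma. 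Second, even granting that reduction, the histogram/large-deviation method cannot deliver a rate below $\varphi$ on its own: at $m-1=g/2$ the number of ternary sequences of length $m-1$ summing to $g$, ignoring the subadditivity constraints, is of order $3^{g/2}\approx 1.732^g>\varphi^g$. Hence the constraints $x_i+x_j\ge x_{i+j}$ must be used essentially even in the ternary regime, and they are constraints on \emph{positions}, not on value-histograms --- two sequences with identical histograms can differ on whether they satisfy them --- so ``maximizing a rate function over admissible histograms'' is not a well-posed substitute. The heuristic that the average value $g/(m-1)>3/2$ exceeds the critical ratio is suggestive but, as the $m\approx g/2$ example shows, not sufficient. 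Closing exactly this gap is the content of Zhai's paper, and it is done there by quite different (and considerably more involved) means.
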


In the rest of this section we will focus on the semigroups with $2m<F< 3m$ in more detail and show that for any $\varepsilon > 0$, the proportion of them with $(2+\varepsilon)m < F< 3m$ goes to zero as $g$ goes to infinity.

\begin{thm}
\label{pluseps}
Let $\varepsilon > 0$ and let $P_{\eps}(g)$ be the number of semigroups with $(2+\varepsilon) m < F< 3m$.  Then 
\[\lim_{g \to \infty} \frac{P_{\eps}(g)}{N(g)} = 0.\]
\end{thm}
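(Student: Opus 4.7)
The plan is to encode semigroups with $2m < F < 3m$ via their Apéry vectors (using Proposition \ref{Rosales}) and then bound their number through Stirling-style asymptotics in the spirit of Proposition \ref{less2m}. A semigroup with multiplicity $m$, genus $g$, and $2m < F < 3m$ corresponds to a sequence $(k_1,\ldots,k_{m-1}) \in \{1,2,3\}^{m-1}$ satisfying the Rosales inequalities with $\sum k_i = g$. Writing $c$ for the number of indices with $k_i = 3$ and $b$ for the number with $k_i=2$, we have $b+2c = g-m+1$, and $F = 2m + \max\{i : k_i = 3\}$, so $F > (2+\varepsilon)m$ is exactly the condition that some index $i > \varepsilon m$ has $k_i = 3$.

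The first step is to construct the projection $\pi:(k_i) \mapsto (\min(k_i,2))$ and verify by a short case analysis on the Rosales inequalities that $\pi(S)$ is a valid Apéry vector for a semigroup $S'$ of multiplicity $m$, genus $g-c$, and $F<2m$. Since the preimages of a fixed $S'$ correspond to choices of $c$ of its $R' = g-m+1-c$ positions of $2$ to lift back to $3$, and the condition $F>(2+\varepsilon)m$ requires at least one such chosen position to exceed $\varepsilon m$, summing over $S'$ gives, for each $m$ and $c \ge 1$,
\[
P_{\varepsilon,m,c}(g) \le \left[\binom{m-1}{c} - \binom{\lfloor \varepsilon m \rfloor}{c}\right]\binom{m-1-c}{g-m+1-2c}.
\]

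The second step is to sum this bound over $c \ge 1$ and over the feasible range of $m$ and then estimate the total asymptotically. I would apply Stirling's approximation as in the proof of Proposition~\ref{less2m}, expressing each multinomial as roughly $r^g/\sqrt{g}$ for some exponential rate $r = r(m/g,c/g)$ and then maximizing $r$. Along the way, Corollary~\ref{msizeless2m} and Theorem~\ref{3m} would be invoked to discard atypical multiplicities and the $F > 3m$ tail respectively, and the subtracted term $\binom{\lfloor \varepsilon m \rfloor}{c}$ would supply exponential savings in the regime where $c$ is proportional to $g$.

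The principal difficulty lies in this asymptotic step: the unrestricted multinomial sum has rate equal to the tribonacci constant $\tau \approx 1.839 > \varphi$, so the rough projection bound alone is not strong enough to give $o(\varphi^g)$. To close the gap, the key Rosales input I would exploit is the sum-free condition implied by $k_{i_0}=3$: for $1 \le i < i_0$ the pair $(k_i,k_{i_0-i})$ cannot be $(1,1)$, forcing at most $\lfloor i_0/2\rfloor$ of the positions below $i_0$ to be $1$s and hence an additional contribution of roughly $i_0/2$ to $\sum k_i$ beyond what the naive count assumes. Incorporating this bookkeeping via a finer Stirling analysis parameterized by $(m/g,c/g,i_0/g)$ with $i_0 > \varepsilon m$, and separately handling the large-$c$ and small-$c$ regimes, should bring the effective rate strictly below $\varphi$ and yield $P_\varepsilon(g) = o(\varphi^g) = o(N(g))$ by Theorem~\ref{Zhai}.
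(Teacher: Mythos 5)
Your setup is fine: the Ap\'ery encoding, the observation that $F>(2+\varepsilon)m$ means the largest index carrying a $3$ exceeds $\varepsilon m$, and the projection $k_i\mapsto\min(k_i,2)$ (which does preserve the Rosales inequalities) are all correct. But the proof has a genuine gap exactly where you locate the ``principal difficulty.'' Your displayed bound ignores all of the Rosales constraints except the counts of $1$s, $2$s and $3$s, and as you yourself note it only gives the tribonacci rate $\approx 1.839^g$. Everything then rests on the claim that the sum-free condition (for $i+j=i_0$ with $k_{i_0}=3$, not both $k_i,k_j$ equal to $1$) recovers a factor strong enough to push the rate below $\varphi$. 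That claim is never established. It is plausible in the regime $c=O(1)$ (there a saddle-point computation with per-pair generating function $2x^3+x^4$ in place of $(x+x^2)^2$ does give exponential savings), but in the regime where $c$ is a positive fraction of $g$ --- precisely the regime responsible for the tribonacci blow-up --- you must make up a gap from $1.839$ down to below $1.618$ using only the pairing constraints attached to the positions of the $3$s, and whether that single family of constraints suffices is a serious open quantitative question in your write-up, not a routine Stirling exercise. Note that even the statement that the total number of semigroups with $2m<F<3m$ is $O(\varphi^g)$ is the main content of Zhao's paper together with Zhai's theorem; a from-scratch analysis of $\{1,2,3\}$-valued Ap\'ery vectors would essentially have to reprove a form of that result, and the paper itself never attempts this (its own Ap\'ery analysis in Proposition~\ref{fracmg} bootstraps off Theorem~\ref{pluseps} and off $N(b)=O(\varphi^b)$ rather than bounding these sequences directly).

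The paper takes a much shorter route that sidesteps the asymptotics entirely. It uses Zhao's classification of semigroups with $2m<F<3m$ by type $(A;k)$ with $k=F-2m$, together with Zhao's bound that the number of genus-$g$ semigroups of type $(A;k)$ is at most $F_{g-|(A+A)\cap[0,k]|+|A|-k-1}$. Since $(2+\varepsilon)m<F<3m$ forces $k>\varepsilon g/3$, one gets $P_\varepsilon(g)\varphi^{-g}\le\frac{2}{\sqrt5}\sum_{k\ge\lceil\varepsilon g/3\rceil}\sum_{A\in\mathcal{A}_k}\varphi^{-|(A+A)\cap[0,k]|+|A|-k-1}$, and Zhao's formula for $\lim T(g)\varphi^{-g}$ combined with Zhai's theorem (which guarantees $T(g)\varphi^{-g}$ is bounded) shows the full double series converges; hence its tail beyond $k\ge\varepsilon g/3$ tends to $0$. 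If you want to rescue your approach, you would either need to import Zhao's type decomposition anyway, or carry out in full the large-deviation estimate for the large-$c$ regime --- at which point you should expect to need more of the Rosales system than the single pairing constraint you cite.
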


We require the following concepts from Zhao~\cite{Zhao}. Let $\acal_k=\{A\subset[0,k-1]\mid0\in A\text{ and }k\notin A+A\}$. Let $S$ be a numerical semigroup with multiplicity $m$ and Frobenius number $F$ satisfying $2m<F<3m$. We say that $S$ has type $(A;k)$, where $0<k<m$ and $A\in\acal_k$, if $F=2m+k$ and $S\cap[m,m+k]=A+m$. Every numerical semigroup with $2m<F<3m$ has a unique type $(A;k)$, since $k=F-2m$ and $A=S\cap[m,m+k]-m$. Zhao proves the following.

\begin{prop}[Zhao]
Let $k$ be a positive integer and let $A\in\acal_k$. Then the number of numerical semigroups of genus $g$ and type $(A;k)$ is at most 
\[
F_{g-|(A+A)\cap[0,k]|+|A|-k-1},
\]
where $F_a$ is the $a$th Fibonacci number.
\end{prop}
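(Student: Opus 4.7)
The plan is to encode each semigroup $S$ of type $(A;k)$ by its Ap\'ery set with respect to its multiplicity $m$: I would write $k_i$ for the integer such that $k_im+i$ is the smallest element of $S$ in the class of $i$ modulo $m$, so $g=\sum_{i=1}^{m-1}k_i$. The hypothesis $2m<F=2m+k<3m$ pins down the $k_i$ very restrictively.

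First I would show that $k_1,\ldots,k_k$ are forced by $(A;k)$. For $i\in[1,k-1]$, either $i\in A$ (then $m+i\in S$ so $k_i=1$), or $i\in(A+A)\setminus A$ (then $m+i\notin S$ but $2m+i=(m+a_1)+(m+a_2)\in S$ so $k_i=2$), or $i\notin A+A$ (then $m+i,2m+i\notin S$ while $3m+i>F$ is in $S$ so $k_i=3$); and $k_k=3$ since $m+k,\,F=2m+k$ are gaps but $3m+k$ lies in $S$. Writing $p=|(A+A)\cap[0,k]|$ and using $A\subseteq A+A$, a direct tally gives
\[
\sum_{i=1}^{k}k_i = (|A|-1)+2(p-|A|)+3(k-p+1) = 3k-|A|-p+2.
\]

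Next I would observe that for each $i\in[k+1,m-1]$ the element $2m+i$ exceeds $F$ and so lies in $S$, forcing $k_i\in\{1,2\}$. Consequently $S$ is uniquely encoded by the sequence $(k_{k+1},\ldots,k_{m-1})$, which is a composition of
\[
\sigma := g-3k+|A|+p-2
\]
into parts of size $1$ and $2$ of length $m-1-k$. Summing over admissible $m$, the number of such compositions equals
\[
\sum_{R=0}^{\lfloor\sigma/2\rfloor}\binom{\sigma-R}{R} = F_{\sigma+1},
\]
by the same Fibonacci identity used in Proposition~\ref{less2m}.

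Finally, because $k\notin A+A$ forces $(A+A)\cap[0,k]\subseteq[0,k-1]$ and hence $p\le k$, I would conclude
\[
\sigma+1 = g-3k+|A|+p-1 \le g-k+|A|-p-1 = g-|(A+A)\cap[0,k]|+|A|-k-1,
\]
so $F_{\sigma+1}\le F_{g-|(A+A)\cap[0,k]|+|A|-k-1}$ by monotonicity of the Fibonacci sequence. The one real subtlety will be the case analysis tallying $\sum_{i\le k}k_i$; once that is done the argument reduces to filling the remaining $m-1-k$ Ap\'ery slots with $1$s and $2$s and counting by Fibonacci.
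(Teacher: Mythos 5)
The paper does not actually prove this proposition --- it is imported verbatim from Zhao's paper --- so I am judging your argument on its own terms, and it has a genuine error in the first case analysis. The type $(A;k)$ records $F=2m+k$ and $S\cap[m,m+k]$, but it does \emph{not} determine $S\cap[2m,2m+k]$. For $i\in[1,k-1]\setminus(A+A)$ you assert $2m+i\notin S$ and hence $k_i=3$, but nothing forces this: $2m+i<F$ may belong to $S$ without being a sum of two smaller nonzero elements of $S$ (generators need not be sums). Concretely, take $k=2$, $A=\{0\}\in\acal_2$, and
\[
S=\{0,4,8,9\}\cup[11,\infty).
\]
This is a numerical semigroup with $m=4$, $F=10=2m+2$, $S\cap[4,6]=\{4\}$, so it has type $(\{0\};2)$ and genus $7$; yet $1\notin A+A$ while $2m+1=9\in S$, so $k_1=2$, not $3$. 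Your formula $\sum_{i\le k}k_i=3k-|A|-p+2$ therefore fails, the tail $(k_{k+1},\dots,k_{m-1})$ no longer encodes the semigroup within its type-and-genus class, and your count undercounts: your method gives $F_{\sigma+1}=F_2=1$ for genus $7$ and type $(\{0\};2)$, whereas there are exactly $3=F_4$ such semigroups (the other two being $\{0,4,7,8\}\cup[11,\infty)$ and $\{0,5,8,9,10,11\}\cup[13,\infty)$). Since the intermediate ``exact count'' can be strictly smaller than the truth, the argument does not establish the stated upper bound, even though your final inequality $\sigma+1\le g-p+|A|-k-1$ happens to point in the right direction.

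The missing idea is that there are \emph{two} families of free binary choices, not one: the elements $m+j$ for $j\in[k+1,m-1]$ (each freely in or out of $S$, since every relevant sum exceeds $F$), and the elements $2m+i$ for $i\in[1,k-1]\setminus(A+A)$ (each freely in or out, whereas $2m+i$ for $i\in(A+A)\cap[1,k-1]$ is forced into $S$ and $2m+k$ is forced out). Writing $p=|(A+A)\cap[0,k]|$, the forced gaps number $(m-1)+(k+1-|A|)+1$ and the free slots number $(m-1-k)+(k-p)=m-1-p$, so for fixed $m$ there are at most $\binom{m-1-p}{g-m-k+|A|-1}$ semigroups of genus $g$ and type $(A;k)$ with that multiplicity; summing over $m$ via the identity $\sum_{n}\binom{n}{M-n}=F_{M+1}$ yields exactly $F_{g-p+|A|-k-1}$. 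Your Fibonacci identity and the monotonicity step at the end are fine; what breaks is the claim that $k_1,\dots,k_k$ are determined by the type.
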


We now note that if a semigroup $S$ of type $(A;k)$ satisfies $(2+\eps)m<F<3m$, it must have $k>\eps m>\eps g/3$, since $g\le3(m-1)$. We also have the general fact that $F_a\le\frac{2}{\sqrt5}\phi^a$ for all $a$. Therefore
\begin{align*}
P_{\eps}(g) &\le\sum_{\eps g/3<k<g}\sum_{A\in\acal_k}F_{g-|(A+A)\cap[0,k]|+|A|-k-1}\\
&\le\frac{2}{\sqrt5}\sum_{\eps g/3<k<g}\sum_{A\in\acal_k}\phi^{g-|(A+A)\cap[0,k]|+|A|-k-1}
\end{align*}
implying that
\begin{align*}
P_{\eps}(g)\phi^{-g}&\le\frac{2}{\sqrt5}\sum_{\eps g/3<k<g}\sum_{A\in\acal_k}\phi^{-|(A+A)\cap[0,k]|+|A|-k-1}\\
&\le\frac{2}{\sqrt5}\sum_{k= \lceil \eps g/3 \rceil }^{\infty}\sum_{A\in\acal_k}\phi^{-|(A+A)\cap[0,k]|+|A|-k-1}.
\end{align*}
Let $T(g)$ be the number of numerical semigroups of genus $g$ satisfying $F<3m$. We have the following theorem from Zhao~\cite{Zhao}.

\begin{thm}[Zhao]
\[
\lim_{g\rar\infty}T(g)\phi^{-g}=\frac{\phi}{\sqrt5}+\frac{1}{\sqrt5}\sum_{k=1}^{\infty}\sum_{A\in\acal_k}\phi^{-|(A+A)\cap[0,k]|+|A|-k-1}.
\]
\end{thm}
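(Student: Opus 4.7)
The plan is to decompose $T(g)$ according to the ratio $F/m$, compute each piece's asymptotic, and then interchange a limit with an infinite sum over types. Concretely, write
\[
T(g)=I(g)+\sum_{k\geq 1}\sum_{A\in\acal_k}T_{A,k}(g),
\]
where $I(g)$ counts semigroups of genus $g$ with $F<2m$ and $T_{A,k}(g)$ counts those of genus $g$ and type $(A;k)$. From the analysis preceding Proposition~\ref{less2m} we already have the exact identity $I(g)=F_{g+1}$, and since $F_{g+1}\sim\frac{\phi}{\sqrt5}\phi^g$, this gives $I(g)\phi^{-g}\rar\phi/\sqrt5$, accounting for the first summand in the theorem.

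Next, for each fixed pair $(A,k)$ I would show the per-type asymptotic
\[
T_{A,k}(g)\phi^{-g}\rar\frac{1}{\sqrt5}\phi^{-|(A+A)\cap[0,k]|+|A|-k-1}.
\]
The preceding proposition of Zhao already supplies the upper bound $T_{A,k}(g)\leq F_{g-|(A+A)\cap[0,k]|+|A|-k-1}$, so the task reduces to a matching lower bound. Once $(A;k)$ is fixed, the Apéry-set coefficients $k_i$ are determined for $i\in A$ (namely $k_i=1$) and forced at $i=k$ (namely $k_k=3$), while the remaining choices are $k_i\in\{1,2\}$ for $i\in(k,m-1]$ and $k_i\in\{2,3\}$ for $i\in[0,k-1]\bsl A$, all constrained by the inequalities of Proposition~\ref{Rosales} and by $\sum k_i=g$. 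I would argue that the inequalities among indices $\leq k$ are precisely the content of $A\in\acal_k$, while the inequalities involving the tail $k_i$ ($i>k$) are generically satisfied because those values only range in $\{1,2\}$, and so the valid completions are asymptotically in bijection with compositions of a fixed integer (depending on $|A|$, $k$, and $g$) into parts of size $1$ and $2$. Such compositions are counted by Fibonacci numbers, matching the upper bound.

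To interchange the limit with the double sum, invoke dominated convergence with the uniform majorant
\[
T_{A,k}(g)\phi^{-g}\leq\tfrac{2}{\sqrt5}\phi^{-|(A+A)\cap[0,k]|+|A|-k-1},
\]
obtained from the preceding proposition together with $F_a\leq\frac{2}{\sqrt5}\phi^a$. Provided the series $\sum_{k,A}\phi^{-|(A+A)\cap[0,k]|+|A|-k-1}$ converges absolutely, combining this with the per-type limit from the previous paragraph yields the stated formula for $\lim_{g\rar\infty}T(g)\phi^{-g}$.

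The main obstacle is the matching lower bound for $T_{A,k}(g)$: verifying that the generic binary completion of an Apéry set of fixed type satisfies every inequality of Proposition~\ref{Rosales} is combinatorially delicate, because the relevant inequalities couple the "type" indices $i\leq k$ with the "tail" indices $i>k$ and must be checked for all valid index pairs. A subsidiary but nontrivial issue is the absolute convergence of the dominating series, which reduces to controlling $|A|-|(A+A)\cap[0,k]|$ as a function of $k$; the definition of $\acal_k$ (forcing $0\in A$ and $k\notin A+A$) keeps $|(A+A)\cap[0,k]|$ comparable to $|A|$, but quantifying this bound over all $A\in\acal_k$ sharply is the technical heart of Zhao's argument.
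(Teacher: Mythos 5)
First, a point of orientation: the paper gives no proof of this statement. It is imported from Zhao \cite{Zhao} and used as a black box (its only role is that, combined with Theorem~\ref{Zhai}, it forces convergence of the double series in the proof of Theorem~\ref{pluseps}). So there is no internal proof to compare against; what follows assesses your reconstruction of Zhao's argument on its own terms. Your architecture---split off the $F<2m$ semigroups, which are counted exactly by $F_{g+1}\sim\frac{\phi}{\sqrt5}\phi^g$, decompose the rest by type $(A;k)$, establish a per-type asymptotic, and interchange limit and sum by dominated convergence---is the right shape. But the two steps you flag as ``obstacles'' are the entire content of the theorem, and your proposal does not supply either of them.

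On the per-type lower bound: this works, and is less delicate than you suggest, but the key observation is missing from your sketch. Once the type is fixed, every Rosales inequality $x_i+x_j\ge x_{i+j}$ with $i+j>k$ is automatic (the left side is at least $2$ and the right side is at most $2$ there), and every wrap-around inequality is vacuous since all $x_i\le3$; so the head and tail do \emph{not} couple. The only binding constraints live among indices in $[1,k]$ and amount to forcing $k_i=2$ for $i\in((A+A)\cap[1,k-1])\bsl A$, leaving $k-|(A+A)\cap[0,k]|$ free choices in $\{2,3\}$ below $k$ and $m-1-k$ free choices in $\{1,2\}$ above $k$. Summing over $m$, the type-$(A;k)$ count of genus $g$ becomes the coefficient of $x^g$ in $x^{2k+2-|A|}(1+x)^{k-|(A+A)\cap[0,k]|}/(1-x-x^2)$, which (using $1+\phi^{-1}=\phi$) is asymptotic to $\frac{1}{\sqrt5}\phi^{\,g+|A|-|(A+A)\cap[0,k]|-k-1}$, as required; asserting this is ``generic'' or ``asymptotically a bijection'' is not a proof. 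On convergence of the majorant series: you cannot leave this as a proviso, because in the paper's own logic the convergence is \emph{deduced from} the statement you are proving together with Theorem~\ref{Zhai}, so assuming it is circular. You must either reproduce Zhao's direct estimate controlling $\sum_{A\in\acal_k}\phi^{|A|-|(A+A)\cap[0,k]|}$, or observe that Zhai's bound $N(g)=O(\phi^g)$ is obtained independently, so that Fatou's lemma applied to your decomposition gives $\sum_{k,A}\phi^{-|(A+A)\cap[0,k]|+|A|-k-1}\le\sqrt5\,\limsup_g T(g)\phi^{-g}<\infty$, after which dominated convergence is legitimate. As written, the proposal is a correct outline with its two essential lemmas unproved.
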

We are now in a position to prove Theorem~\ref{pluseps}.

\begin{proof}[Proof of Theorem~\ref{pluseps}.]
By Theorem \ref{Zhai}, we know that $T(g)\phi^{-g}$ is bounded above, so the sum 
\[
\sum_{k=1}^{\infty}\sum_{A\in\acal_k}\phi^{-|(A+A)\cap[0,k]|+|A|-k-1}
\]
converges. It follows that $\sum_{k= \lceil \eps g/3\rceil}^{\infty}\sum_{A\in\acal_k}\phi^{-|(A+A)\cap[0,k]|+|A|-k-1}$ approaches $0$ as $g$ goes to infinity.
\end{proof}

\begin{prop}
\label{fracmg}
Let $\eps>0$ and $\gamma=\frac{5+\sqrt5}{10}$. Let $\Phi_{\eps}(g)$ be the number of numerical semigroups with genus $g$ and $\left(\gam-\eps\right)g<m<\left(\gam+\eps\right)g$. Then $\lim_{g\rar\infty}\frac{\Phi_{\eps}(g)}{N(g)}=1$.
\end{prop}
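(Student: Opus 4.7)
The plan is to show that the number of genus-$g$ semigroups with $m\notin((\gam-\eps)g,(\gam+\eps)g)$ is $o(\varphi^g)=o(N(g))$ by decomposing according to the Frobenius number.  First I would combine Theorem~\ref{3m}, Proposition~\ref{minuseps}, and Theorem~\ref{pluseps} to discard an $o(\varphi^g)$ set and reduce to semigroups with $F\in((2-\eps_0)m,(2+\eps_0)m)$ for a small $\eps_0>0$ to be chosen in terms of $\eps$.  The subcase $F<2m$ is then immediate from Corollary~\ref{msizeless2m}.

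For the subcase $2m\le F<(2+\eps_0)m$, I would apply Zhao's decomposition by type $(A;k)$, noting that $k=F-2m<\eps_0 m\le\eps_0 g$, and restrict attention to $k<\eps_0 g/3$ (the range $k\ge\eps_0 g/3$ is handled by the argument of Theorem~\ref{pluseps}, since in the restricted regime $m\le g$ forces $F>(2+\eps_0/3)m$).  For each such type $(A;k)$ and multiplicity $m$, I would parameterize a type-$(A;k)$ Ap\'ery set by the sets $L\subseteq[k+1,m-1]$ of residues with $k_i=1$ and $J_3\subseteq[1,k]\setminus(A+A)$ of residues with $k_i=3$, and use Vandermonde to sum over $|J_3|$.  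This produces an upper bound of $\binom{m-1-e}{g-m-k+|A|-1}$ with $e=|(A+A)\cap[0,k]|$, whose sum over $m$ recovers Zhao's $F_{g-e+|A|-k-1}$ exactly, so no slack is lost relative to Zhao's summed bound.

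The key observation is that this per-$m$ bound has the Fibonacci-peak form $\binom{M-R}{R}$ with $M=g-e+|A|-k-2$ and $R=g-m-k+|A|-1$, and so by the Stirling estimate driving Proposition~\ref{less2m} it is concentrated at $m$ within $O(\eps_0 g)$ of $\gam g$.  For $\eps_0$ chosen sufficiently small in terms of $\eps$, this concentration peak sits well inside $((\gam-\eps)g,(\gam+\eps)g)$, and the sum over bad $m$ is bounded above by $c(g)\cdot F_{g-e+|A|-k-1}$ with $c(g)\to 0$ uniformly in $(A,k)$ satisfying $k<\eps_0 g/3$.  Summing these bounds over all such $(A,k)$ and invoking the convergence of $\sum_{k,A}\varphi^{-|(A+A)\cap[0,k]|+|A|-k-1}$ from Zhao's theorem then yields $c(g)\cdot O(\varphi^g)=o(\varphi^g)$, completing the bound.

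The hardest part, I expect, will be isolating the factor $e$ in the per-multiplicity bound: without it the naive bound sums over $(A,k)$ to a divergent series of the shape $\sum_k\varphi^{k-1}$ and cannot be matched against Zhao's convergent total, so the semigroup axiom forcing $J_3\cap(A+A)=\emptyset$ must be used carefully in the count.  Once that refined bound is in place, the concentration and summation steps reduce to standard applications of the Stirling-type estimates already developed in Proposition~\ref{less2m} and in the proof of Theorem~\ref{pluseps}.
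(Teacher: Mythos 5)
Your argument is correct, and for the main case $2m<F<3m$ it takes a genuinely different route from the paper, even though both proofs share the same skeleton (reduce to $F<3m$ by Zhai--Zhao, dispose of $F<2m$ via Corollary~\ref{msizeless2m}, use Theorem~\ref{pluseps} to force the ``3-part'' of the Ap\'ery set to have length $O(\eps_0 g)$, then exploit the concentration of $\sum_R\binom{M-R}{R}$ at $R\approx\alpha M$). Where you refine Zhao's type decomposition $(A;k)$ to a per-multiplicity bound $\binom{m-1-e}{g-m-k+|A|-1}$ via Vandermonde --- and you are right that retaining the $e=|(A+A)\cap[0,k]|$ term is essential, since $\sum_{k,A}\phi^{|A|-k-1}$ diverges --- the paper instead conditions on the largest index $a$ with $k_a=3$ and the partial genus $b=\sum_{i\le a}k_i$, writes the count as $\sum H(a,b)\binom{m-1-a}{g-b-(m-1-a)}$, and uses only the crude bound $H(a,b)=O(\phi^b)$ together with $b<\eps g/2$: the off-peak tail binomial is $O(\psi^{g-b})$ with $\psi<\phi$, and $\psi^{g-b}\phi^b\le(\psi^{1-\eps/2}\phi^{\eps/2})^g=o(\phi^g/g^3)$. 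The paper's version is cheaper in that it never needs the convergence of Zhao's series $\sum_{k,A}\phi^{-e+|A|-k-1}$ at this point (only $N(b)=O(\phi^b)$), and it avoids the residue-level bookkeeping of $L$ and $J_3$; your version is sharper per type (no slack against Zhao's Fibonacci bound) and makes the uniformity in $(A,k)$ transparent, at the cost of invoking Theorem~\ref{pluseps}'s convergent sum a second time. One small point to make explicit if you write this up: the uniformity of your $c(g)\to0$ over types requires noting that the peak location $m^*=\gamma g+O(k)$ and the length $M=g-k+|A|-e-2\ge g(1-O(\eps_0))$ are both controlled uniformly for $k<\eps_0 g/3$, so that bad $m$ forces $R$ to deviate from $\alpha M$ by at least a fixed fraction of $M$; this is straightforward but is the place where $\eps_0$ must be chosen small relative to $\eps$.
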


\begin{proof}
By Theorem~\ref{3m}, it suffices to consider the cases $m<F<2m$ and $2m<F<3m$. The first case is simply Corollary~\ref{msizeless2m}, so we now assume $2m<F<3m$.

The Ap\'ery set of a numerical semigroup with $2m< F< 3m$ is of the form $\{k_1 m + 1,\ldots, k_{m-1} m + (m-1)\}$ where each $k_i \in \{1,2,3\}$ and at least one is equal to $3$.  Let $a$ be maximal such that $k_a = 3$.  By Proposition \ref{Rosales}, the number of semigroups with multiplicity $m$ and $F = 2m+a$ is exactly equal to the number of sequences $(k_1,\ldots, k_{m-1})$ satisfying the following conditions:
\begin{enumerate}
 \item For each $1\le i \le a-1,\ k_i \in \{1,2,3\}$, 
\item $k_a = 3$,
\item For each $a+1 \le j \le m-1,\ k_j \in \{1,2\}$,
\item For each $i,j$ with $i+j \le m-1$ and $k_i = k_j = 1$ we have $k_{i+j} \neq 3$.
\end{enumerate}

Let $H(a,b)$ be the number of numerical semigroups with $2m<F<3m$, multiplicity $a+1$, and genus $b$. Then the number of possibilities for the sequence $(k_1,\ldots, k_a)$ satisfying $\sum_{i=1}^a k_i = b$ is simply $H(a,b)$. Since $\sum_{i=1}^{m-1} k_i = g$ and $F=2m+a$, the remaining elements $(k_{a+1},\ldots, k_{m-1})$ consist of $g-b-(m-1-a)$ values of $k_i$ equal to $2$, with the rest equal to $1$.  These can be arranged in any order. Thus the total number of numerical semigroups with $2m<F<3m$ is 
\[
\sum_{b=3}^g\sum_{a=\lceil b/3\rceil}^{b}\sum_{m=a+1}^{g}H(a,b)\binom{m-1-a}{g-b-(m-1-a)}.
\]
Applying Theorem~\ref{pluseps} with $\varepsilon = \epsilon/6$, we need only consider the case $a<\eps m/6$, for which $b\le3a<\eps m/2\le\eps g/2$. The number of such numerical semigroups is at most
\[
\sum_{b<\eps g/2}\sum_{a=\lceil b/3\rceil}^b\sum_{m=a+1}^{g}H(a,b)\binom{m-1-a}{g-b-(m-1-a)}.
\]
We need to show that those terms in the above sum for which $m$ is outside the range $((\gam-\eps)g,(\gam+\eps)g)$ contribute $o(\phi^g)$ to the sum. For such $m$, we have $|m-\gam g| \ge \eps g$, which implies that
\[
|m-1-a-\gam(g-b)+1+a-\gam b| \ge \eps g.
\]
By the triangle inequality, we have 
\begin{align*}
|m-1-a-\gam(g-b)|+1+a+\gam b&=|m-1-a-\gam(g-b)|+|1+a|+|-\gam b|\\
&\ge|m-1-a-\gam(g-b)+1+a-\gam b|.
\end{align*} 
Therefore, when $|m-\gam g | \ge \eps g$ we have
\begin{align*}
|m-1-a-\gam(g-b)|+1+a+\gam b &\ge \eps g\\
|m-1-a-\gam(g-b)| &\ge \eps g-1-a-\gam b\\
&\ge\eps g-1-\eps g/6-\gam\eps g/2\\
&\ge0.471\eps g\ge0.471\eps(g-b)
\end{align*}
for sufficiently large $g$. As in the proof of Proposition \ref{less2m}, for such $m$, there is some $\psi<\phi$ for which $\binom{m-1-a}{g-b-(m-1-a)}=O(\psi^{g-b})$. Since the total number of numerical semigroups of genus $b$ is asymptotic to $\phi^b$, we certainly have $H(a,b)=O(\phi^b)$. We conclude that for $a,b,m$ satisfying the above conditions, 
\[H(a,b)\binom{m-1-a}{g-b-(m-1-a)}\le c\psi^{g-b}\phi^b\le c_{\eps}(\psi^{1-\eps/2}\phi^{\eps/2})^g,\] so the total contribution to the sum from such $m$ is indeed $o(\phi^g)$.
\end{proof}

Proposition~\ref{fracmg} implies, in particular, that for fixed $\eps>0$, as $g$ approaches infinity, the proportion of numerical semigroups with $m>(\gam+\eps)g$ approaches $0$. For $\eps=\frac{1}{1.3667}-\gam\approx0.0081>0$, the property $m>(\gam+\eps)g$ is precisely $g<1.3667m$. This implies statement (3) of Theorem~\ref{tech}, and thus completes the proof of Theorem~\ref{main}.

We also point out the following corollary which answers a question from \cite{Kaplan}.

\begin{cor}
Let $R(g)$ be the number of semigroups of genus $g$ for which $2g<3m$.  Then 
\[\lim_{g\rightarrow\infty} \frac{R(g)}{N(g)} = 1.\]
\end{cor}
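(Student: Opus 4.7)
The plan is to deduce this immediately from Proposition~\ref{fracmg}. The inequality $2g<3m$ is precisely $m>\tfrac{2g}{3}$, so it suffices to show that almost every semigroup of genus $g$ has multiplicity strictly greater than $\tfrac{2g}{3}$.

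Recall $\gam=\frac{5+\sqrt 5}{10}\approx 0.7236$, so $\gam-\tfrac{2}{3}=\frac{3\sqrt 5-5}{30}>0$. Choose any $\eps$ with $0<\eps<\gam-\tfrac{2}{3}$; then $\gam-\eps>\tfrac{2}{3}$. By Proposition~\ref{fracmg}, the number $\Phi_{\eps}(g)$ of semigroups of genus $g$ with $(\gam-\eps)g<m<(\gam+\eps)g$ satisfies $\lim_{g\to\infty}\Phi_{\eps}(g)/N(g)=1$. Every such semigroup has $m>(\gam-\eps)g>\tfrac{2g}{3}$, i.e.\ $2g<3m$, hence is counted by $R(g)$. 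Therefore
\[
\frac{R(g)}{N(g)}\ge \frac{\Phi_{\eps}(g)}{N(g)}\longrightarrow 1,
\]
and since $R(g)\le N(g)$ the limit is exactly $1$.

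There is no real obstacle here beyond checking the inequality $\gam>\tfrac{2}{3}$; all of the work is already contained in Proposition~\ref{fracmg} (which in turn rests on Theorem~\ref{pluseps} and Corollary~\ref{msizeless2m}).
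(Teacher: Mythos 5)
Your proposal is correct and is essentially identical to the paper's proof: both rewrite $2g<3m$ as $m>\tfrac{2g}{3}$, invoke Proposition~\ref{fracmg}, and use the fact that $\gam=\frac{5+\sqrt5}{10}>\tfrac{2}{3}$ to choose a suitable $\eps$. No further comment is needed.
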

\begin{proof}
We note that $2g<3m$ is equivalent to $\frac{2g}{3} < m$.  Proposition \ref{fracmg} implies that for any $\epsilon>0$, almost all semigroups satisfy $(\gamma-\epsilon) g < m$.  Since $\gamma > .72$, this completes the proof.
\end{proof}

It is interesting that this limit is $1$ since the numerical evidence for small $g$ is inconclusive.  For example, for $g=24$ the value of this ratio is approximately $.3962$ and there is no clear trend toward $1$.  See the chart at the beginning of Section 6 of \cite{Kaplan}.  

This corollary has implications for Theorem 1 of \cite{Kaplan}.  Let $N(m,g)$ denote the number of semigroups $S$ with multiplicity $m$ and genus $g$.
\begin{thm}
Suppose $2g<3m$.  Then $N(m-1,g-1) + N(m-1,g-2) = N(m,g)$.
\end{thm}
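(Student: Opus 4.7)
The plan is to construct an explicit bijection via the Ap\'ery-set parametrization of Proposition~\ref{Rosales}: I will consider the truncation map
\[
\tau\colon(k_1,\ldots,k_{m-1})\mapsto(k_1,\ldots,k_{m-2}),
\]
and show that under the hypothesis $2g<3m$, $\tau$ is a bijection between multiplicity-$m$ Ap\'ery tuples of genus $g$ and the disjoint union of multiplicity-$(m-1)$ Ap\'ery tuples of genus $g-1$ and $g-2$. Injectivity is immediate since $k_{m-1}$ is recovered from the genus via $k_{m-1}=g-\sum_{i=1}^{m-2}k_i$, so the two main tasks are to verify that $\tau$ is well-defined and surjective onto the target.

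For well-definedness, I would first use the standard bound $F\le 2g-1$ (combined with $2g<3m$) to deduce that $k_i\le 3$ for every $i\le m-2$ and $k_{m-1}\in\{1,2\}$, the latter coming from the sharper inequality $k_{m-1}m+(m-1)\le F+m\le 4m-2$. Thus the image of $\tau$ lies in tuples of genus $g-1$ or $g-2$. Next I would check that $(k_1,\ldots,k_{m-2})$ satisfies the Rosales inequalities for multiplicity $m-1$: the first-set inequalities $k_i+k_j\ge k_{i+j}$ for $i+j\le m-2$ are inherited from multiplicity $m$, and the second-set inequalities $k_i+k_j+1\ge k_{i+j-m+1}$ for $i+j\ge m$ and $i,j\le m-2$ reduce to the trivial $3\ge k_{i+j-m+1}$ since $k_i,k_j\ge 1$ and $k_l\le 3$ throughout.

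For surjectivity, given any multiplicity-$(m-1)$ Ap\'ery tuple of genus $g'\in\{g-1,g-2\}$, I would first show that its entries are all $\le 3$: if some $k_l\ge 4$ then the multiplicity-$(m-1)$ Frobenius number would satisfy $F'\ge 3m-2$, and combining $F'\le 2g'-1$ with $g'\le g-1$ contradicts $2g<3m$. Appending $k_{m-1}:=g-g'\in\{1,2\}$ then produces a valid multiplicity-$m$ Ap\'ery tuple, because the extra Rosales inequalities it must satisfy --- $k_a+k_{m-1-a}\ge k_{m-1}$, $k_a+k_{m-1}+1\ge k_{a-1}$, $2k_{m-1}+1\ge k_{m-2}$, and the shifted second-set form $k_i+k_j+1\ge k_{i+j-m}$ for $i,j\le m-2$ --- all reduce to trivial bounds using that every coordinate is at most $3$ and $k_{m-1}\ge 1$. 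The main obstacle is the careful bookkeeping of the two different families of Rosales inequalities for $m$ versus $m-1$ with their differing index shifts; the hypothesis $2g<3m$ is precisely what confines every Ap\'ery coordinate to $\{1,2,3\}$, collapsing each potentially delicate inequality into a triviality.
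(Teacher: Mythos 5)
The paper states this theorem without proof, citing it as Theorem 1 of \cite{Kaplan}, so there is no in-paper argument to compare against. Your proof is correct and complete: the truncation/extension bijection on Ap\'ery tuples (Kunz coordinates) is essentially the argument of the cited reference, and the key point --- that $2g<3m$ together with $F\le 2g-1$ forces every coordinate into $\{1,2,3\}$ with $k_{m-1}\in\{1,2\}$, which collapses all the Rosales inequalities in both directions to trivialities --- is exactly right, including the index bookkeeping for the two families of inequalities at multiplicities $m$ and $m-1$.
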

This theorem combined with the previous corollary gives one way to understand why $N(g)$ grows like a constant times $\varphi^g$.  Unfortunately, this does not give a new proof of Zhai's result, Theorem \ref{Zhai}, because it is used in the proof of the corollary.

\section{Semigroups which do occur as Weierstrass semigroups}

We first recall the definition of the weight of a numerical semigroup.  This is another way of measuring a semigroup's complexity.
\begin{defn}
Let $S$ be a semigroup of genus $g$ with gap set $\{a_1,\ldots, a_g\}$.  We define the weight of $S$ by $\ W(S) = \sum_{i=1}^g a_i - \frac{g(g+1)}{2}$.
\end{defn}
Note that for any $g\ge 1$, the semigroup containing all positive integers greater than $g$ has weight zero.  This definition plays an important role in the theory of semigroups and algebraic curves since for any curve $C$ of genus $g$ it is known that the sum of the weights of all of the semigroups of Weierstrass points of $C$ is $g^3-g$ \cite{ACGH}.

The following difficult result of Eisenbud and Harris proves that certain semigroups do occur as the Weierstrass semigroup of a point on some curve \cite{EH}.

\begin{thm}
Let $S$ be a semigroup with $F<2m$ and $W(S) < g-1$.  Then $S$ is Weierstrass.
\end{thm}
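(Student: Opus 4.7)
The target is a smooth pointed curve $(C,p)$ of genus $g$ whose Weierstrass semigroup at $p$ is $S$. This is equivalent to saying that the canonical series $|K_C|$ has ramification sequence $\alpha_i = a_i - i$ at $p$, where $a_1 < \cdots < a_g$ are the gaps of $S$; the weight $W(S)$ is exactly $\sum_i \alpha_i$, and the expected codimension inside the space of pointed genus-$g$ curves of the locus with this ramification is $W(S)$. The hypothesis $W(S) < g-1$ asserts that the Brill--Noether-type count for existence is strictly positive, and my plan is to realize this prediction via the Eisenbud--Harris degeneration-and-smoothing strategy for limit linear series.

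Specifically, I would take a reducible nodal curve $X_0$ of compact type, namely a chain $E_1 \cup \cdots \cup E_g$ of smooth elliptic components glued at nodes $q_1, \ldots, q_{g-1}$, with $p$ marked on $E_g$. On $X_0$ one attempts to build a limit $g^{g-1}_{2g-2}$ whose aspect on $E_g$ has ramification $(\alpha_1, \ldots, \alpha_g)$ at $p$; component by component this amounts to producing, on each $E_i$, a line bundle of the correct degree with prescribed vanishing sequences at the two attaching nodes. On an elliptic curve, linear equivalence of divisors supported at two points is controlled by the group law, so this is a linear condition on $q_i - q_{i-1}$ in $\mathrm{Pic}^0(E_i)$, which can be satisfied by generic choice of the attaching points provided the required vanishing increments across the chain are consistent. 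The bound $F < 2m$ keeps all ramification indices within a tight range, which is exactly what makes the component-level realization problem feasible, and the weight inequality $W(S) < g-1$ ensures that the total adjusted Brill--Noether number $g - 1 - W(S)$ of the limit series is strictly positive.

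Once the limit linear series has been constructed on $X_0$, the Eisenbud--Harris smoothing theorem promotes it to an honest $g^{g-1}_{2g-2}$ on a general smoothing $C$ of $X_0$, and upper semicontinuity of ramification preserves the prescribed ramification sequence at $p$ in the limit. Because a smooth curve of genus $g$ has a unique complete $g^{g-1}_{2g-2}$, namely $|K_C|$, the ramification of $|K_C|$ at $p$ must agree with $(\alpha_1,\ldots,\alpha_g)$, which forces the Weierstrass semigroup at $p$ to be exactly $S$.

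The hardest step is the combinatorial propagation across the chain: the vanishing sequences at the nodes must be threaded consistently from $E_g$ back through $E_1$ so that on each elliptic component the prescribed vanishing orders actually correspond to a realizable line bundle. Without $F < 2m$, the local realizability conditions on some $E_i$ can fail because gap-pattern jumps of size greater than $m$ would impose incompatible equations in $\mathrm{Pic}^0(E_i)$; with the bound, the constraints are linear and generic, and the slack $g - 1 - W(S)$ from the weight hypothesis is precisely what makes the global Brill--Noether count non-negative on every component and strictly positive somewhere, which is the hypothesis required to invoke the Eisenbud--Harris smoothing theorem.
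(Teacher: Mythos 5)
The paper does not prove this statement at all: it is quoted as a theorem of Eisenbud and Harris \cite{EH} and used as a black box, so there is no internal proof to compare yours against. Your sketch does correctly identify the strategy of the original Eisenbud--Harris paper --- degeneration to a chain of elliptic curves, construction of a limit canonical series ($r=g-1$, $d=2g-2$) with prescribed ramification at a marked point on the end component, and their smoothing machinery --- and the dictionary between the gap sequence $a_1<\cdots<a_g$, the vanishing sequence $a_i-1$ of $|K_C|$ at $p$, and the weight $W(S)=\sum_i(a_i-i)$ is right.

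As a proof, however, this has genuine gaps and could not stand on its own. First, the entire content of the theorem is the component-by-component construction you defer: one must actually exhibit, on each elliptic component, an aspect with compatible vanishing sequences at the two nodes and the prescribed ramification at $p$, and this is an honest obstruction problem on an elliptic curve (whether a degree-$(2g-2)$ bundle admits sections with two prescribed vanishing sequences depends on whether a specific divisor class is trivial in $\mathrm{Pic}^0$). The hypotheses $F<2m$ and $W(S)<g-1$ enter precisely there, and ``the constraints are linear and generic'' is an assertion where the argument should be; you never write down the vanishing sequences being threaded through the nodes or verify that the total ramification budget closes. Second, your appeal to semicontinuity is backwards. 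Ramification is upper semicontinuous, so it can only \emph{drop} when passing from the special fiber to the general fiber of a smoothing; constructing a limit series with the desired ramification on $X_0$ does not by itself give a series with that ramification on the nearby smooth curve. That transfer is exactly what the Eisenbud--Harris smoothing theorem provides, and it comes with a dimension hypothesis (the space of limit series with the imposed ramification must be dimensionally proper) that you do not verify. Finally, even granting a $g^{g-1}_{2g-2}$ with ramification \emph{at least} $(\alpha_1,\ldots,\alpha_g)$ at $p$ on the general fiber, you must still rule out strictly larger ramification in order to conclude that the Weierstrass semigroup is exactly $S$ rather than some semigroup of larger weight containing the same constraints; this requires a separate dimension count and is not addressed.
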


We will show that this condition on the weight of $S$ is quite restrictive.
\begin{prop}
\label{lessg}
Let $Q(g)$ be the number of semigroups $S$ with $F<2m$ and $W(S) < g-1$.  Then
\[\lim_{g\to \infty} \frac{Q(g)}{N(g)} = 0.\]
\end{prop}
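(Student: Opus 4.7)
My plan is to reduce the weight condition to a partition-counting problem and then exploit the subexponential growth of the partition function, which is far slower than the exponential growth of $N(g)$ given by Theorem~\ref{Zhai}.

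First, I would parameterize semigroups with $F<2m$ via their Ap\'ery data. Writing the Ap\'ery set as $\{k_1m+1,\ldots,k_{m-1}m+(m-1)\}$ with each $k_i\in\{1,2\}$, set $R=\{i:k_i=2\}\subseteq\{1,\ldots,m-1\}$ and $r=|R|$, so that $g=m-1+r$. The gaps in residue class $i$ modulo $m$ are $\{i\}$ if $k_i=1$ and $\{i,m+i\}$ if $k_i=2$, so the sum of all gaps is $\binom{m}{2}+rm+\sum_{i\in R}i$. Subtracting $\binom{g+1}{2}=\binom{m+r}{2}$ and simplifying yields the clean identity
\[
W(S) \;=\; \sum_{i\in R} i \;-\; \binom{r}{2}.
\]

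Second, I would change variables to identify the valid $R$ with partitions. Writing $R=\{i_1<\cdots<i_r\}$ and setting $s_j=i_j-j$, the tuple $(s_1,\ldots,s_r)$ ranges over weakly increasing nonnegative integer sequences bounded above by $m-1-r$. Since $\sum_j i_j=\sum_j s_j+\binom{r+1}{2}$, the condition $W(S)<g-1=m+r-2$ becomes
\[
\sum_{j=1}^r s_j \;<\; m-2.
\]
Dropping the upper bound on $s_r$ (which only costs an overcount), such tuples are in bijection with integer partitions of some $n\le m-3$ into at most $r$ parts.

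Third, I would invoke the Hardy--Ramanujan estimate $p(n)=e^{O(\sqrt n)}$ for the partition function. For each fixed $r$ the number of valid $R$ is at most $\sum_{n=0}^{m-3}p(n)\le m\cdot p(m-3)=e^{O(\sqrt g)}$, since $m\le g+1$. Summing over $r\in\{0,1,\ldots,\lfloor g/2\rfloor\}$ gives
\[
Q(g)\;\le\;(g+1)\,e^{O(\sqrt g)}\;=\;e^{O(\sqrt g)},
\]
which is $o(\varphi^g)=o(N(g))$ by Theorem~\ref{Zhai}. I do not anticipate any serious obstacle: the weight identity requires a short but careful computation, and recognizing the right change of variables $s_j=i_j-j$ is the only creative step. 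The underlying moral is that a small weight forces $R$ to be concentrated near the bottom of $\{1,\ldots,m-1\}$ in a rigid partition shape, eliminating essentially all of the exponential freedom that a generic size-$r$ subset would enjoy.
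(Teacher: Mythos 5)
Your argument is correct and is essentially the paper's: the change of variables $s_j=i_j-j$ turning the weight condition into a partition count is exactly the content of Lemma~\ref{qbinom}, and the Hardy--Ramanujan estimate is the same tool the paper applies. The only structural difference is that the paper deduces Proposition~\ref{lessg} as an immediate corollary of the much stronger Theorem~\ref{quad} (almost all semigroups have weight at least $(\beta_1-\eps)g^2$), whereas your direct treatment of the linear threshold $W(S)<g-1$ gets away with the cruder subexponential bound $e^{O(\sqrt{g})}=o(\varphi^g)$ without needing to tune any constants.
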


This proposition is actually a consequence of a much stronger statement about the weights of numerical semigroups. To prove that statement, we first need the following lemma.

\begin{lemma}
\label{qbinom}
Let $p(x,y,z)$ be the number of partitions of $x$ into at most $y$ parts, each of size at most $z$. Then the number of numerical semigroups with genus $g$, multiplicity $m$, and weight $w$ satisfying $m<F<2m$ is exactly $p(w-(g-m+1),g-m+1,2m-2-g)$.
\end{lemma}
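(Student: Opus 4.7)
My plan is to use the Apéry set correspondence from Proposition~\ref{Rosales} and reduce the problem to counting constrained integer partitions.

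Since $F < 2m$, each $k_i$ in the Apéry set description is either $1$ or $2$, and Proposition~\ref{Rosales} imposes no further constraints in this case. Let $R$ be the number of indices $i$ with $k_i = 2$. Then $g = \sum k_i = (m-1) + R$, so $R = g - m + 1$. Since $m < F < 2m$ forces $R \geq 1$ and $R \le m-1$, the parameter $R$ ranges freely in the interval $[1, m-1]$. Let $i_1 < i_2 < \cdots < i_R$ be the indices in $\{1,\ldots,m-1\}$ with $k_{i_j} = 2$. The gaps of $S$ are then exactly
\[
\{1, 2, \ldots, m-1\}\ \cup\ \{m + i_j : 1 \le j \le R\},
\]
since the element of the Apéry set congruent to $i \bmod m$ is $k_i m + i$, and gaps congruent to $i$ are the positive multiples of $m$ plus $i$ below this value.

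Next I would compute the weight in terms of the $i_j$'s. Summing the gaps gives $\sum a = \frac{m(m-1)}{2} + Rm + \sum_{j=1}^R i_j$. Expanding $\frac{g(g+1)}{2} = \frac{(m-1+R)(m+R)}{2} = \frac{m(m-1)}{2} + Rm + \frac{R(R-1)}{2}$ and subtracting yields the clean formula
\[
W(S) = \sum_{j=1}^R i_j - \frac{R(R-1)}{2}.
\]

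Now I set $\mu_j = i_j - j$ for $1 \le j \le R$. Since $1 \le i_1 < i_2 < \cdots < i_R \le m-1$, the sequence $(\mu_j)$ is weakly increasing, with $\mu_1 \ge 0$ and $\mu_R \le m-1-R = 2m - 2 - g$. Substituting into the weight formula gives $W(S) = \sum_{j=1}^R \mu_j + R$, so specifying a semigroup of the desired type (once $m$, $g$, and $w$ are fixed, so $R = g-m+1$ is determined) is equivalent to choosing a weakly increasing sequence of $R = g-m+1$ nonnegative integers, each at most $2m-2-g$, summing to $w - R = w - (g-m+1)$. Reading such a sequence in reverse order gives exactly a partition of $w - (g-m+1)$ into at most $g - m + 1$ parts with each part at most $2m - 2 - g$, which is counted by $p(w - (g-m+1),\ g-m+1,\ 2m-2-g)$. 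The only slightly delicate point in the argument is the weight computation and verifying the correct range $\mu_R \le m-1-R$, but both are routine once the bijection $i_j \mapsto i_j - j$ is in hand.
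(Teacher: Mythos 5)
Your proof is correct and is essentially the same argument as the paper's: both identify the gaps above $m$ as $m+i_1<\cdots<m+i_{g-m+1}$, compute the weight, and use the shift $i_a\mapsto i_a-a$ to biject with partitions of $w-(g-m+1)$ into at most $g-m+1$ parts of size at most $2m-2-g$. The only cosmetic difference is that you route through the Ap\'ery set and the intermediate formula $W(S)=\sum_j i_j-\binom{R}{2}$, whereas the paper writes the gap set directly and obtains $w=\sum_a(i_a-a+1)$ in one step.
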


\begin{remark}
Equivalently, this is the coefficient of $q^{w-(g-m+1)}$ in the $q$-binomial coefficient $\qbinom{m-1}{g-m+1}_q$. See~\cite{Stanley} for details. 
\end{remark}

\begin{proof}
Suppose $\nint\bsl S=\{1,2,\dotsc,m-1,m+i_1,\dotsc,m+i_{g-m+1}\}$ with $i_a\in[1,m-1]$ for all $a$. We have

\begin{align*}
w &=1+2+\dotsb+m-1+(m+i_1)+\dotsb+(m+i_{g-m+1})\\
&\hspace{0.2in}-(1+2+\dotsb+m-1+m+\dotsb+g) = \sum_{a=1}^{g-m+1}(i_a-a+1),
\end{align*}
which can be rearranged as
\begin{align*}
w-(g-m+1)&=\sum_{a=1}^{g-m+1}(i_a-a).
\end{align*}
The $i_a-a$ are nonnegative because $i_1 \ge 1$ and $i_{a+1} > i_a$.  They are non-decreasing since $i_{a+1}-(a+1)\ge i_a+1-(a+1)=i_a-a$.  Finally, since $m-1 \ge i_{g-m+1}$ and $i_{g-m+1} - (g-m+1) \ge i_a-a$, we have $2m-2-g \ge i_a - a$.  Thus each distinct choice of these $i_a$ is associated with a unique partition of $w-(g-m+1)$ into at most $g-m+1$ parts, each of size at most $2m-2-g$. Furthermore, from any such partition $j_1+\dotsb+j_{g-m+1}$, where $0\le j_1\le\dotsb\le j_{g-m+1}$, it is easy to reconstruct $S$ by setting $i_a=j_a+a$; the resulting $i_a$ will be strictly increasing and bounded above by $m-1$, as desired. This completes the proof of the lemma.
\end{proof}

We recall the convention that there is a unique partition of $0$ which has $0$ parts.  This shows that the lemma also holds for semigroups with $F<m$, which all satisfy $F = m-1$.

We observe that $p(x,y,z)=p(yz-x,y,z)$, since if $j_1+\dotsb+j_y=x$ is a partition of $x$ into parts of size at most $z$, then $(z-j_1)+\dotsb+(z-j_y)=yz-x$ is a partition of $yz-x$ into parts of size at most $z$, and vice versa. This simple fact will be useful later.

We now state and prove our main theorem of this section.

\begin{thm}
\label{quad}
Let $\bt_1 = \frac{3}{2} \left(\frac{\ln \varphi}{\pi}\right)^2,\ \gamma = \frac{5+\sqrt{5}}{10},\ \bt_2 =(1-\gamma)(2\gamma-1)-\beta_1$, and $\eps>0$.  Let $Y_{\eps}(g)$ be the number of numerical semigroups with genus $g$ and weight at most $(\bt_1-\eps)g^2$ and $Z_{\eps}(g)$ be the number of numerical semigroups with genus $g$ and weight at least $(\bt_2+\eps)g^2$.  Then $\lim_{g\rar\infty}\frac{Y_{\eps}(g)}{N(g)}=\lim_{g\rar\infty}\frac{Z_{\eps}(g)}{N(g)}=0$.
\end{thm}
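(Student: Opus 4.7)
The plan is to treat $Y_\eps$ and $Z_\eps$ separately. I will bound $Y_\eps(g)$ by a direct injection of semigroups of genus $g$ and weight $w$ into partitions of $w$ combined with the Hardy--Ramanujan asymptotic, and bound $Z_\eps(g)$ by using the partition duality behind Lemma~\ref{qbinom} to fold large weights into small weights, reducing it to the $Y$-case.

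For $Y_\eps$: given a semigroup $S$ of genus $g$ with gaps $a_1 < \dotsb < a_g$, the tuple $(a_g - g,\, a_{g-1}-(g-1),\, \dotsc,\, a_1 - 1)$ is a partition of $W(S)$, and distinct semigroups produce distinct partitions. Hence
\[
 Y_\eps(g) \le \sum_{w=0}^{\lfloor(\bt_1-\eps)g^2\rfloor} p(w).
\]
The constant $\bt_1 = \frac{3}{2}(\ln\varphi/\pi)^2$ is calibrated precisely so that $\pi\sqrt{2\bt_1/3} = \ln\varphi$. By Hardy--Ramanujan's asymptotic $\log p(n) = \pi\sqrt{2n/3} + O(\log n)$, we get $p(\lfloor(\bt_1-\eps)g^2\rfloor) = \varphi^g \, e^{-c(\eps) g + O(\log g)}$ for some $c(\eps) > 0$, so the displayed sum is $o(\varphi^g)$. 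Theorem~\ref{Zhai} then gives $Y_\eps(g) = o(N(g))$.

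For $Z_\eps$: fix a small auxiliary $\eps_0 = \eps_0(\eps) > 0$. Proposition~\ref{fracmg}, Proposition~\ref{minuseps}, Theorem~\ref{pluseps}, and Theorem~\ref{3m} together show that the number of semigroups with $|m - \gam g| > \eps_0 g$ or $|F - 2m| > \eps_0 m$ is $o(N(g))$, so we may restrict attention to the regime $m = \gam g + O(\eps_0 g)$, $F < (2+\eps_0)m$. In the subcase $F < 2m$, Lemma~\ref{qbinom} expresses the number of semigroups of multiplicity $m$, genus $g$, and weight $w$ as $p(w-y, y, z)$ with $y = g-m+1$ and $z = 2m-2-g$; the symmetry $p(a, y, z) = p(yz - a, y, z)$ noted after Lemma~\ref{qbinom} pairs these with semigroups of weight $w'$ where $w + w' = y(z+2) = (g-m+1)(2m-g) = (\bt_1 + \bt_2) g^2 + O(\eps_0 g^2)$, using $(1-\gam)(2\gam-1) = \bt_1 + \bt_2$. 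Choosing $\eps_0$ small enough compared to $\eps$, the condition $w \ge (\bt_2 + \eps)g^2$ forces $w' \le (\bt_1 - \eps/2)g^2$, and the count is dominated by $Y_{\eps/2}(g) = o(N(g))$.

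The technical crux is the residual case $2m < F < (2+\eps_0)m$, where Lemma~\ref{qbinom} does not directly apply. I would stratify these semigroups by their type $(A;k)$ with $k = F - 2m < \eps_0 m$ in the sense of Zhao~\cite{Zhao}; for fixed $A$, the remaining Ap\'ery coordinates $(k_{k+1}, \dotsc, k_{m-1}) \in \{1,2\}^{m-1-k}$ again parametrize a Young diagram in essentially the same rectangular box as in Lemma~\ref{qbinom}, perturbed only by an $O(\eps_0 g^2)$ shift in the weight and a bounded combinatorial factor contributed by $A$. The same duality should again fold large weights into small, and summing over $(A;k)$ against Zhao's convergent series --- with Proposition~\ref{close1} available to dominate the small-$k$ tails --- should yield $o(N(g))$ overall. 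Making this extension precise is the main obstacle.
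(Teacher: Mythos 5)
Your bound on $Y_{\eps}(g)$ is correct and in fact cleaner than the paper's: the direct injection $S \mapsto (a_g-g, a_{g-1}-(g-1), \dotsc, a_1-1)$ sends every semigroup of genus $g$ and weight $w$ to a distinct partition of $w$, with no case division on $F$ versus $m$, whereas the paper only gets the bound $K_2(w,m,g)\le p(w)$ for $F<2m$ via Lemma~\ref{qbinom} and must run a separate, more elaborate argument (positions of the $k_i\ge 2$ and $k_i=3$, the factor $\binom{s}{t}\le(1+\eps_0)^g$ from Proposition~\ref{close1}) to cover $2m<F<3m$. Your $Z_{\eps}$ argument in the subcase $F<2m$ --- folding $w$ to $w'=(g-m+1)(2m-g)-w$ via $p(x,y,z)=p(yz-x,y,z)$ and using $m\in((\gam-\del)g,(\gam+\del)g)$ so that $(g-m+1)(2m-g)=(\bt_1+\bt_2)g^2+O(\del g^2)$ --- is exactly the paper's argument and is fine.

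The genuine gap is the case $2m<F<(2+\del_0)m$ of the $Z_{\eps}$ bound, which you explicitly leave as a sketch (``making this extension precise is the main obstacle''). Your proposed route through Zhao's types $(A;k)$ is not obviously workable as stated: for a fixed type, the valid completions of the Ap\'ery set are not simply $\{1,2\}^{m-1-k}$, the ``bounded combinatorial factor contributed by $A$'' is actually a sum over all $A\in\acal_k$ for all $k<\del_0 m$, and Zhao's convergent series weights each $A$ by $\phi^{-|(A+A)\cap[0,k]|+|A|-k-1}$ rather than by anything compatible with a weight stratification, so it is not clear how to interleave it with the Hardy--Ramanujan bound. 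The paper instead avoids types entirely: writing $i_1<\dotsb<i_s$ for the indices with $k_i\ge 2$ and $i_{j_1}<\dotsb<i_{j_t}$ for those with $k_i=3$, it derives the exact weight formula $w=\sum_{a=1}^s(i_a-a)+s+\sum_{a=1}^t(i_{j_a}-a)+t(m-s+1)$, notes that $t\le F-2m<\del_0 m$ forces the correction terms to be at most $(\eps/2)g^2$, counts the choices of the $i_a$ with $d=\sum(i_a-a)$ fixed as $p(d,s,m-1-s)$ (to which the duality $p(x,y,z)=p(yz-x,y,z)$ applies exactly as in the $F<2m$ case), and absorbs the $\binom{s}{t}$ choices of the $i_{j_a}$ into a factor $(1+\eps_0)^g$ via Proposition~\ref{close1}, with $\eps_0$ chosen so that $(1+\eps_0)e^{\pi\sqrt{2/3}\sqrt{\bt_1-\eps/4}}<\phi$. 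Some such quantitative bookkeeping is unavoidable, and your proposal does not yet supply it.
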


In order to show this, we need the Hardy-Ramanujan formula \cite{HR}:
\begin{thm}[Hardy, Ramanujan]
\label{hr}
Let $p(n)$ be the total number of partitions of $n$. Then as $n$ grows large, $p(n)$ is asymptotically equal to
\[
\frac{1}{4n\sqrt3}e^{\pi\sqrt{\frac{2n}3}}.
\]
\end{thm}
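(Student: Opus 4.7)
The plan is to apply the saddle-point method to the partition generating function
\[
F(q) = \sum_{n\ge 0} p(n)\,q^n = \prod_{k\ge 1}\frac{1}{1-q^k},
\]
which is holomorphic on $|q|<1$. By Cauchy's formula,
\[
p(n) = \frac{1}{2\pi}\int_{-\pi}^{\pi} F(re^{i\theta})\,r^{-n}\,e^{-in\theta}\,d\theta
\]
for any $0<r<1$, and I would choose $r = e^{-t^{\ast}}$ so that the integrand has a saddle at $\theta = 0$ on the positive real axis, then evaluate the resulting Gaussian integral.

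The key analytic input is the asymptotic behavior of $F$ as $q\to 1$. Writing $q = e^{-t}$ with $t\to 0^+$, I would establish
\[
\log F(e^{-t}) = \frac{\pi^2}{6t} + \frac{1}{2}\log\frac{t}{2\pi} - \frac{t}{24} + O\bigl(e^{-c/t}\bigr).
\]
The cleanest derivation uses the modular transformation $\eta(-1/\tau) = \sqrt{-i\tau}\,\eta(\tau)$ of the Dedekind eta function, applied with $\tau = it/(2\pi)$: since $\eta(\tau) = e^{\pi i\tau/12}\prod_{k\ge 1}(1-e^{2\pi ik\tau})$ equals $e^{-t/24}F(e^{-t})^{-1}$ in our parametrization, the transformation rewrites $F(e^{-t})$ in terms of a product in the dual variable $2\pi/t$ that converges superexponentially. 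A Mellin-transform proof, picking up the poles of $\Gamma(s)\zeta(s)\zeta(s+1)$ at $s=1$ and $s=0$, gives the same expansion. Both the leading $\pi^2/(6t)$ term and the half-logarithmic correction are essential for capturing the exact constant $1/(4n\sqrt 3)$.

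The saddle $t^{\ast}$ satisfies $\frac{d}{dt}\bigl(\frac{\pi^2}{6t}+nt\bigr)=0$, giving $t^{\ast} = \pi/\sqrt{6n}$, at which
\[
\frac{\pi^2}{6t^{\ast}} + nt^{\ast} = \pi\sqrt{2n/3},
\]
exactly the exponential in the statement. Parametrizing $q = e^{-t^{\ast}+i\theta}$ and Taylor-expanding the integrand around $\theta=0$, the linear term in $\theta$ vanishes by the saddle condition, and the quadratic coefficient evaluates to $-\pi^2/(6t^{\ast 3})$. A standard Gaussian integration in $\theta$ contributes a factor $\pi/(6^{1/4}n^{3/4})$; together with the prefactor $\sqrt{t^{\ast}/(2\pi)} = (\sqrt 2(6n)^{1/4})^{-1}$ from the $\tfrac{1}{2}\log(t/(2\pi))$ term of $\log F$ and the overall $(2\pi)^{-1}$ from Cauchy's formula, these combine to give $1/(4n\sqrt 3)$ after simplification.

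The main obstacle is controlling the integrand for $\theta$ away from $0$ — the minor arcs. Near a primitive $b$th root of unity, the modularity of $F$ shows that $\log|F|$ grows only like $\pi^2/(6b^2 t)$, strictly smaller than the contribution from $q=1$ for $b\ge 2$. A circle-method decomposition of $[-\pi,\pi]$ into Farey arcs of order $\sqrt n$ then bounds the total minor-arc contribution by $o\bigl(e^{\pi\sqrt{2n/3}}/n\bigr)$, and the result follows. Uniform control of $F$ on the whole unit circle, not just near $q=1$, is the technical heart of the argument, and is exactly what the modularity of $\eta$ provides; this is also the step that Rademacher later refined to produce the celebrated exact formula for $p(n)$.
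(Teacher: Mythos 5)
The paper does not prove this statement at all: it is the classical Hardy--Ramanujan asymptotic, imported verbatim from \cite{HR} and used as a black box in the proof of Theorem~\ref{quad}. So there is no ``paper's proof'' to compare against, and your proposal should be judged on its own terms. As an outline of the standard saddle-point/circle-method proof it is correct, and the quantitative details check out: the eta transformation with $\tau = it/(2\pi)$ does give $\log F(e^{-t}) = \frac{\pi^2}{6t} + \frac{1}{2}\log\frac{t}{2\pi} - \frac{t}{24} + O(e^{-c/t})$; the saddle $t^{\ast} = \pi/\sqrt{6n}$ yields the exponent $\pi\sqrt{2n/3}$; the quadratic coefficient $-\pi^2/(6t^{\ast 3})$ gives a Gaussian factor $\pi\,6^{-1/4}n^{-3/4}$, which combined with $\sqrt{t^{\ast}/(2\pi)} = (\sqrt{2}(6n)^{1/4})^{-1}$ and the $(2\pi)^{-1}$ from Cauchy's formula produces $\frac{1}{2\sqrt{2}\sqrt{6}\,n} = \frac{1}{4n\sqrt{3}}$, as required. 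You also correctly identify the real work: making the major-arc expansion uniform in $\theta$ and bounding the minor arcs via the behavior of $F$ near roots of unity of order $b\ge 2$, where $\log|F| \lesssim \pi^2/(6b^2 t)$ gives a contribution of exponential order at most $\frac{1}{2}\pi\sqrt{2n/3}$, which is comfortably $o(e^{\pi\sqrt{2n/3}}/n)$. What you have written is a plan rather than a complete proof --- the uniform minor-arc estimates and the error control in the Gaussian approximation are asserted rather than carried out --- but the plan is the standard one, it is sound, and every constant is right. For the purposes of this paper, though, the intended treatment of Theorem~\ref{hr} is simply to cite \cite{HR}.
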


\begin{proof}[Proof of Theorem~\ref{quad}.]
We may assume that $F<3m$ by Theorem~\ref{3m}. We consider the cases $F<2m$ and $2m<F<3m$ separately. 

First suppose $F<2m$. Let $K_2(w,m,g)$ be the number of numerical semigroups with genus $g$, weight $w$, multiplicity $m$, and $F<2m$. We wish to bound $Y_{\eps}(g)=\sum_{m=1}^{g}\sum_{w<(\bt_1-\eps)g^2}K_2(w,m,g)$. Now by Lemma~\ref{qbinom}, $K_2(w,m,g)$ is equal to $p(w-(g-m+1),g-m+1,2m-2-g)$, so in particular $K_2(w,m,g)\le p(w-(g-m+1))\le p(w)$ where $p(w)$ is the total number of partitions of $w$. But by Theorem~\ref{hr}, we have
\[
\sum_{m=1}^g\sum_{w<(\bt_1-\eps)g^2}p(w)=O(g e^{\pi\sqrt{2/3}\sqrt{\bt_1-\eps}g})=o(\phi^g)
\]
for $\bt_1$ such that $e^{\pi\sqrt{2/3}\sqrt{\bt_1}}=\phi$, which gives $\bt_1= \frac{3}{2} \left(\frac{\ln\phi}{\pi}\right)^2 \approx0.035$. This gives the desired lower bound on the weight of a typical numerical semigroup.

To show $Z_{\eps}(g)/N(g)$ goes to $0$ as $g$ goes to infinity, we transform the problem of bounding $Z_{\eps}(g)$ into the problem of bounding $Y_{\eps/2}(g)$. Recall that $\gam=\frac{5+\sqrt5}{10}$ and $\bt_2 =(1-\gam)(2\gam-1)-\bt_1$, and define $I := ((\gam-\del)g,(\gam+\del)g)$, where $\del$ is chosen so small that $\del+2\del^2<\eps/2$. From Proposition~\ref{fracmg}, it suffices to consider those semigroups for which $m\in I$. We use Lemma~\ref{qbinom} to write
\[
\sum_{m\in I}\sum_{w>(\bt_2+\eps)g^2}K_2(w,m,g) = \sum_{m\in I}\sum_{w>(\bt_2+\eps)g^2}p(w-(g-m+1),g-m+1,2m-2-g).
\]
As noted above, we have $p(w-(g-m+1),g-m+1,2m-2-g)=p((g-m+1)(2m-2-g)+(g-m+1)-w,g-m+1,2m-2-g)$. Let $w'=(g-m+1)(2m-2-g)+(g-m+1)-w$, so that the right-hand side of the above equation can be rewritten as
\begin{align*}
&\sum_{m\in I}\sum_{w>(\bt_2+\eps)g^2}p(w',g-m+1,2m-2-g)\\
&=\sum_{m\in I}\sum_{w'<(g-m+1)(2m-2-g)+g-m+1-(\bt_2+\eps)g^2}p(w',g-m+1,2m-2-g)\\
&=\sum_{m\in I}\sum_{\substack{w''<(g-m+1)(2m-2-g)\\+2(g-m+1)-(\bt_2+\eps)g^2}}p(w''-(g-m+1),g-m+1,2m-2-g)\\
&=\sum_{m\in I}\sum_{w''<(g-m+1)(2m-2-g)+2(g-m+1)-(\bt_2+\eps)g^2}K_2(w'',m,g)
\end{align*}
where we again used Lemma~\ref{qbinom} in the last line. Assuming from Proposition~\ref{fracmg} that $m\in I$, we have $g-m+1< g-(\gam-\del)g+1=(1-\gam+\del)g+1$ and $2m-2-g<2(\gam+\del)g-g=(2\gam-1+2\del)g$. Hence the last line above is at most
\begin{align*}
&\sum_{m\in I}\sum_{w''<[(1-\gam+\del)g+1](2\gam-1+2\del)g+2(1-\gam+\del)g+2-(\bt_2+\eps)g^2}K_2(w'',m,g)\\
= &\sum_{m\in I}\sum_{w''<[(1-\gam)+\del][(2\gam-1)+2\del]g^2-(\bt_2+\eps)g^2+(1+4\del)g+2}K_2(w'',m,g)\\
= &\sum_{m\in I}\sum_{w''<(\bt_1+\del+2\del^2-\eps)g^2+(1+4\del)g+2}K_2(w'',m,g)\\
\le &\sum_{m\in I}\sum_{w''<(\bt_1-\eps/2)g^2}K_2(w'',m,g)
\end{align*}
for sufficiently large $g$, since $\eps-(\del+2\del^2)>\eps/2$ by construction. But the last line is at most $Y_{\eps/2}(g)=o(\phi^g)$, so we are done.

Next suppose $2m<F<3m$, and suppose the Ap\'ery set is given by $\{k_1 m+1,\ldots, k_{m-1} m + m-1\}$. Let $K_3(w,m,t,g)$ be the number of numerical semigroups with genus $g$, weight $w$, multiplicity $m$, $F$ satisfying $2m<F<3m$, and exactly $t$ values of $a$ such that $k_a=3$. Using Theorem~\ref{pluseps} and Proposition~\ref{fracmg} above, we may assume that $F<(2+\del_0)m$, where $\del_0$ will be chosen later. We first bound the number of semigroups with $w<(\bt_1-\eps)g^2$. The intuition here is that the numerical semigroups with $2m<F<(2+\del_0)m$ and weight $w$ look fairly similar to the numerical semigroups with $F<2m$ and weight $w$, and the ability to set some $k_i$ equal to $3$ does not greatly increase the number of such semigroups.

Let $i_1<\dotsb<i_s$ be the set of indices such that $k_{i_a}\ge2$, and let $i_{j_1}<\dotsb<i_{j_t}$ be the set of indices such that $k_{i_{j_a}}=3$. We have $s+t=g-m+1$, or alternatively, $g=m+s+t-1$. We can write the weight $w$ as follows:
\begin{align*}
w &=\sum_{a=1}^{m-1}a+\sum_{a=1}^s(m+i_a)+\sum_{a=1}^t(2m+i_{j_a})-\sum_{a=1}^{m+s+t-1}a\\
&=\sum_{a=1}^s(m+i_a)-\sum_{a=1}^s(m-1+a)+\sum_{a=1}^t(2m+i_{j_a})-\sum_{a=1}^t(m+s-1+a)\\
&=\sum_{a=1}^s(i_a-a)+s+\sum_{a=1}^t(i_{j_a}-a)+t(m-s+1).
\end{align*}

We set $\sum_{a=1}^s(i_a-a)=d$. We work in parallel to the case $F<2m$. By the same reasoning as in the proof of Lemma~\ref{qbinom}, the number of choices for the $i_a$ is precisely $p(d,s,m-1-s)<p(d)<p(w)$. The total number of choices for the $i_{j_a}$ is at most $\binom st=\binom{g-m+1-t}{t}$. Choose $\eps_0$ small enough that $\psi:=(1+\eps_0)e^{\pi\sqrt{2/3}\sqrt{\bt_1-\eps}}<\phi$. By Proposition~\ref{close1}, there is some $\del_0>0$ so that $\sum_{R=0}^{\lceil \del_0 g \rceil}\binom{g-R}{R}$ is bounded above by a constant times $(1+\eps_0)^g$ for sufficiently large $g$. Since we assumed that $F<(2+\del_0)m$, we have $t\le i_{j_t}\le\del_0 m<\del_0 g$. Then $\binom{g-t}{t}$ is bounded above by $(1+\eps_0)^g$, and so of course $\binom{g-m+1-t}{t}$ is as well. Summing over $d$, we conclude that $K_3(w,m,t,g)$ is bounded by a polynomial in $g$ times $p(w)(1+\eps_0)^g$, which is in turn bounded by $p((\bt_1-\eps)g^2)(1+\eps_0)^g=O\left(\frac{\psi^g}{g^2}\right)$ by the Hardy-Ramanujan formula. Summing over all $m$, $t$, and $w<(\bt_1-\eps)g^2$ gives a count of possible such semigroups $S$ which is $o(\phi^g)$, as desired.

For $w>(\bt_2+\eps)g^2$, we proceed by reducing to the situation $w<(\bt_1-\eps/4)g^2$, again in parallel with our strategy for $F<2m$. Let $0<\del_0<\eps/4$; then for sufficiently large $g$ and all $t<\del_0g$, we have 
\[t(2m-s+1)+s<2mt+t+s\le2gt+t+s<2\del_0g^2+t+s<(\eps/2)g^2.\] Now since $(1-\gam)\del_0<(1-\gam)\eps/4$, we can choose $\del_1$ so small that $\del_1+2\del_1^2+(1-\gam+\del_1)\del_0<\eps/4$. Next, choose $\eps_0$ small enough such that $\psi:=(1+\eps_0)e^{\pi\sqrt{2/3}\sqrt{\bt_1-\eps/4}}<\phi$. Finally, choose $\del_2$ small enough that $\sum_{R=0}^{\lceil \del_2 g \rceil}\binom{g-R}{R}$ is bounded above by $(1+\eps_0)^g$, and let $\del=\min\{\del_0,\del_1,\del_2\}$. Since $i_{j_a} - a < m$, for any $t$ we have $\sum_{a=1}^t (i_{j_a}-a) < tm$.  For $t<\delta g$,  
\[
\sum_{a=1}^s(i_a-a)=w-s-\sum_{a=1}^t(i_{j_a}-a)-t(m-s+1)>w-s-tm-t(m-s+1)>(\bt_2+\eps/2)g^2.
\]
If $d=\sum_{a=1}^s(i_a-a)$, then for a fixed $d$ we have $p(d,s,m-1-s)$ ways of choosing the $i_a$ and no more than $\binom st$ ways of choosing the $i_{j_a}$ from the $i_a$. Therefore we have
\[
K_3(w,m,t,g)\le\sum_{d>(\bt_2+\eps/2)g^2}p(d,s,m-1-s)\binom st
\]
and again using the fact that $p(x,y,z)=p(yz-x,y,z)$, the right-hand side can be rewritten as
\begin{align*}
&\sum_{d<s(m-1-s)-(\bt_2+\eps/2)g^2}p(d,s,m-1-s)\binom st\\
\le\ &(1+\eps_0)^g\sum_{d<s(m-1-s)-(\bt_2+\eps/2)g^2}p(d,s,m-1-s).
\end{align*}
For $t<\del g$, we have $s=g-m+1-t > (1-\del)g-m$. Then further assuming that $m\in((\gam-\del)g,(\gam+\del)g)$, we have $s\le g-m+1<(1-\gam+\del)g+1$ and $m-1-s < 2m-(1-\del)g<(2\gam-1+3\del)g$. Thus the last line can be bounded above by
\begin{align*}
& (1+\eps_0)^g\sum_{d<[(1-\gam+\del)g+1](2\gam-1+3\del)g-(\bt_2+\eps/2)g^2}p(d,s,m-1-s)\\
=\ &(1+\eps_0)^g\sum_{d<(\bt_1-\eps/2)g^2+(\del+2\del^2+(1-\gam+\del)\del)g^2+(2\gam-1+3\del)g}p(d,s,m-1-s).
\end{align*}
Since we chose $\del$ so that $(\del+2\del^2+(1-\gam+\del)\del)g^2<(\eps/4)g^2$, for sufficiently large $g$ this is at most
\begin{align*}
&(1+\eps_0)^g\sum_{d<(\bt_1-\eps/4)g^2}p(d,s,m-1-s)\\
\le\ & (1+\eps_0)^g\sum_{d<(\bt_1-\eps/4)g^2}e^{\pi\sqrt{2/3}\sqrt{\bt_1-\eps/4}\cdot g}=O( \psi^g)
\end{align*}
by the Hardy-Ramanujan formula. Summing over $t$ and $m$ gives the desired bound.
\end{proof}
It would be interesting to see whether we can improve on the constants given in the statement of this result.  A more careful analysis of the partitions occurring in this proof will probably yield better bounds.

Since $g-1<(\beta_1-\eps)g^2$ for, say, $\eps=\beta_1/2$ and sufficiently large $g$, Proposition~\ref{lessg} follows immediately from Theorem~\ref{quad}. This result shows that although the Eisenbud and Harris family of semigroups are Weierstrass, they also have density zero in the entire set of numerical semigroups.  The main theorem of the paper shows that the set of semigroups which are not Weierstrass because they fail the Buchweitz criterion for some $n$ also has density zero.  There are other known examples of semigroups which are known to be Weierstrass but do not fit into this Eisenbud and Harris family, and semigroups which are not Weierstrass but do not fail the Buchweitz criterion.  See for example \cite{Torres, TorresOV}.  

It would be very interesting to show that a positive proportion of numerical semigroups are Weierstrass, or to show that a positive proportion are not Weierstrass.  The problem of determining the density of the set of Weierstrass semigroups in the entire set of numerical semigroups remains completely open.

\section{Acknowledgments}
We would like to thank Nathan Pflueger, Alex Zhai, Gaku Liu, Evan O'Dorney, and Yufei Zhao for helpful conversations. We would also like to thank Jonathan Wang and Alan Deckelbaum for assistance during this project. We thank the referee for many helpful comments that improved the clarity of the paper.  Sections of this work were done as part of the University of Minnesota Duluth REU program, funded by NSF/DMS grant 1062709 and NSA grant H98230-11-1-0224. Finally, we would like to thank Joseph Gallian for his advice and support.

\end{document}